\newtheorem{theorem}{Theorem}
\theoremstyle{plain}
\newtheorem{corollary}{Corollary}
\newtheorem{definition}{Definition}
\newtheorem{lemma}{Lemma}
\newtheorem{remark}{Remark}
\numberwithin{equation}{section}
\begin{document}
\title[Hermite-Hadamard type inequalities for h-convex function]{Some new
inequalities of Hermite-Hadamard type for $h-$convex functions on the
co-ordinates via fractional integrals.}
\author{Erhan SET$^{\blacktriangle }$}
\address{$^{\blacktriangle }$Department of Mathematics, \ Faculty of Science
and Arts, Ordu University, Ordu-TURKEY}
\email{erhanset@yahoo.com}
\author{M. Zeki Sar\i kaya$^{\blacksquare }$}
\address{$^{\blacksquare }$Department of Mathematics, \ Faculty of Science
and Arts, D\"{u}zce University, D\"{u}zce-TURKEY}
\email{sarikayamz@gmail.com}
\author{Hatice \"{O}g\"{u}lm\"{u}\c{s}$^{\clubsuit }$}
\address{$^{\clubsuit }$Department of Mathematics, \ Faculty of Science and
Arts, D\"{u}zce University, D\"{u}zce-TURKEY}
\email{hat\i ceogulmus@hotmail.com}
\subjclass[2000]{ 26A33, 26A51, 26D15.}
\keywords{Riemann-Liouville fractional integrals, Hermite-Hadamard type
inequality, h-convex functions on the co-ordinates.}

\begin{abstract}
By making use of the identity obtained by Sar\i kaya, some new
Hermite-Hadamard type inequalities for h-convex functions on the
co-ordinates via fractional integrals are established. Our results have some
relationships with the results of Sar\i kaya(\cite{M.Z.S})
\end{abstract}

\maketitle

\section{INTRODUCTION}

Let $f:I\subseteq 
\mathbb{R}
\rightarrow \mathbb{R}$ be a convex function defined on the interval $I$ of
real numbers and $a,b\in I$ with $a<b$ , then%
\begin{equation*}
f(\frac{a+b}{2})\leq \frac{1}{b-a}\int_{a}^{b}f(x)dx\leq \frac{f(a)+f(b)}{2}.
\end{equation*}%
holds, the double inequality is well known in the literature as
Hermite-Hadamard inequality \cite{DraPe}:

Both inequalities hold in the reversed direction if $f$ is concave. For
recent results, generalizations and new inequalities related to the
Hermite-Hadamard inequality see ( \cite{Dra}, \cite{P.P},\cite{SSY}, \cite%
{S.O.D})

The classical Hermite- Hadamard inequality provides estimates of the mean
value of a continuous convex function $f:\left[ a,b\right] \rightarrow 
\mathbb{R}$

Let us now consider a bidemensional interval $\Delta :=\left[ a,b\right]
\times \left[ c,d\right] $ in $%
\mathbb{R}
^{2}$ with $a<b$ and $c<d$.\ A mapping $f:\Delta \rightarrow 
\mathbb{R}
$ is said to be convex on if the following inequality:%
\begin{equation*}
f(tx+(1-t)z,ty+(1-t)w)\leq tf(x,y)+(1-t)f(z,w)
\end{equation*}%
holds, for all $(x,y),(z,w)\in \Delta $ and $t\in \left[ 0,1\right] $ . If
the inequality reversed then $f$ is said to be concave on $\Delta $.

A function $f:\Delta \rightarrow 
\mathbb{R}
$ is said to be on the co-ordinates on $\Delta $ if the partial mappings $%
f_{y}:[a,b]\rightarrow 
\mathbb{R}
,f_{y}(u)=f(u,y)$ and $f_{x}:[c,d]\rightarrow 
\mathbb{R}
,f_{x}(v)=f(x,v)$ are convex where defined for all $x\in \left[ a,b\right] $
and $y\in \left[ c,d\right] .$ (see \cite{Dragomir})

A formal definition for co-ordinated convex function may be stated as
follows:

\begin{definition}
\label{d1}A function $f:\Delta \rightarrow 
\mathbb{R}
$ will be called co-ordinated convex on $\Delta $ , for all $t,k\in \left[
0,1\right] $ and $\left( x,u\right) ,\left( y,w\right) \in \Delta $ ,if the
following inequality holds: 
\begin{eqnarray*}
&&f\left( tx+\left( 1-t\right) y,ku+\left( 1-k\right) w\right) \\
&\leq &tkf\left( x,u\right) +k\left( 1-t\right) f\left( y,u\right) +t\left(
1-k\right) f\left( x,w\right) +\left( 1-t\right) \left( 1-k\right) f\left(
y,w\right)
\end{eqnarray*}
\end{definition}

Clearly, every convex mapping is convex on the co-ordinates, but the
converse is not generally true (\cite{Dragomir}). Some interesting and
important inequalities for convex functions on the co-ordinates can be found
in (\cite{9}, \cite{10}, \cite{SSO})

In \cite{Dralamori}, Alomari and Darus established the following definition
of $s$--convex function in the second sense on co--ordinates.

\begin{definition}
\label{d2}Consider the bidimensional interval $\Delta :=\left[ a,b\right]
\times \left[ c,d\right] $ in $\left[ 0,\infty \right) ^{2}$ with $a<b$ and $%
c<d.$ The mapping $f:\Delta \rightarrow 
\mathbb{R}
$ is $s-$convex on $\Delta $ if%
\begin{equation*}
f\left( \lambda x+\left( 1-\lambda \right) z,\lambda y+\left( 1-\lambda
\right) w\right) \leq \lambda ^{s}f\left( x,y\right) +\left( 1-\lambda
\right) ^{s}f\left( z,w\right)
\end{equation*}%
holds for all $(x,y),(z,w)\in \Delta $ with $\lambda \in \left[ 0,1\right] ,$
and for some fixed $s\in \left( 0,1\right] .$
\end{definition}

A function $f:\Delta \rightarrow 
\mathbb{R}
$ is s-convex on $\Delta $\ is called co-ordinated s-convex on $\Delta $ if
the partial mappings $f_{y}:[a,b]\rightarrow 
\mathbb{R}
,$ $f_{y}(u)=f(u,y)$ and $f_{x}:[c,d]\rightarrow 
\mathbb{R}
,$ $f_{x}(v)=f(x,v)$ are s-convex for all $x\in \left[ a,b\right] $ and $%
y\in \left[ c,d\right] $ with some fixed $s\in \left( 0,1\right] .$

In \cite{Latif}, Latif and Alamori give the notion of h-convexity of a
function $f$ on a rectangle from the plane $%
\mathbb{R}
^{2}$ and h-convexity on the co-ordinates on a rectangle from the plane $%
\mathbb{R}
^{2}$ as follows

\begin{definition}
\label{d3}Let us consider a bidimensional interval $\Delta :=\left[ a,b%
\right] \times \left[ c,d\right] $ in $%
\mathbb{R}
^{2}$ with $a<b$ and $c<d.$ Let $h:J\subseteq 
\mathbb{R}
\rightarrow 
\mathbb{R}
$ be a positivie function. A mapping $f:\Delta :=\left[ a,b\right] \times %
\left[ c,d\right] \rightarrow 
\mathbb{R}
$ is said to be h-convex on $\Delta ,$ if f is non-negative and if the
following inequality:%
\begin{equation*}
f\left( \lambda x+\left( 1-\lambda \right) z,\lambda y+\left( 1-\lambda
\right) w\right) \leq h\left( \lambda \right) f\left( x,y\right) +h\left(
1-\lambda \right) f\left( z,w\right)
\end{equation*}%
holds, for all $(x,y),(z,w)\in \Delta $ with $\lambda \in \left( 0,1\right)
. $ Let us denote this class of functions by $SX\left( h,\Delta \right) .$
The function\ f is said to be h-concave if the inequality reversed. We denot
this class of functions by $SV\left( h,\Delta \right) .$
\end{definition}

A function $f:\Delta \rightarrow 
\mathbb{R}
$ is said to be $h$-convex on the co-ordinates on $\Delta $\ if the partial
mappings $f_{y}:[a,b]\rightarrow 
\mathbb{R}
,$ $f_{y}(u)=f(u,y)$ and $f_{x}:[c,d]\rightarrow 
\mathbb{R}
,$ $f_{x}(v)=f(x,v)$ are $h$-convex where defined for all $x\in \left[ a,b%
\right] $ and $y\in \left[ c,d\right] $. A formal definition of $h$-convex
functions may also be stated as follows:

\begin{definition}
\label{d4} \cite{Latif} A function $f:\Delta \rightarrow 
\mathbb{R}
$ is said to be h-convex on the co-ordinates on $\Delta $ , if the following
inequality:%
\begin{align}
&  \label{1} \\
& f\left( tx+\left( 1-t\right) y,ku+\left( 1-k\right) w\right)   \notag \\
& \leq h\left( t\right) h\left( k\right) f\left( x,u\right) +h\left(
k\right) h\left( 1-t\right) f\left( y,u\right)   \notag \\
& +h\left( t\right) h\left( 1-k\right) f\left( x,w\right) +h\left(
1-t\right) h\left( 1-k\right) f\left( y,w\right)   \notag
\end{align}%
holds for all $t,k\in \left[ 0,1\right] $ and $\left( x,u\right) ,\left(
x,w\right) ,\left( y,u\right) ,\left( y,w\right) \in \Delta .$
\end{definition}

Obviously, if $h(\alpha )=\alpha $, then all the non-negative convex
(concave) functions on $\Delta $\ belong to the class $SX(h,\Delta )$ ($%
SV\left( h,\Delta \right) $) and if $h(\alpha )=\alpha ^{s}$, where $s\in
(0,1)$, then the class of $s$-convex on \ $\Delta $ belong to the class $%
SX(h,\Delta )$. Similarly we can say that if $h(\alpha )=\alpha ,$ then the
class of non-negative convex (concave) functions on the co-ordinates on $%
\Delta $\ is contained in the class of $h$-convex (concave) functions on the
co-ordinates on $\Delta $\ and if $h(\alpha )=\alpha ^{s},$where $s\in (0,1)$%
, then the class of $s$-convex functions on the co-ordinates on $\Delta $\
is contained in the class of $h$-convex functions on the co-ordinates on $%
\Delta .$

In the following we will give some necessary definitions which are used
further in this paper. \ More details, one can consult [\cite{7}, \cite{8}, 
\cite{13}]

\begin{definition}
\label{d5}Let $f\in L_{1}\left[ a,b\right] $. The Riemann-Liouville
integrals $J_{a^{+}}^{\alpha }f$ and $J_{b^{-}}^{\alpha }f$ order $\alpha >0$
and $a\geq 0$ are defined by%
\begin{equation*}
J_{a^{+}}^{\alpha }f\left( x\right) =\frac{1}{\Gamma \left( \alpha \right) }%
\int_{a}^{x}\left( x-t\right) ^{\alpha -1}f\left( t\right) dt,\text{ \ }x>a
\end{equation*}%
and%
\begin{equation*}
J_{b^{-}}^{\alpha }f\left( x\right) =\frac{1}{\Gamma \left( \alpha \right) }%
\int_{x}^{b}\left( t-x\right) ^{\alpha -1}f\left( t\right) dt,\text{ \ }x<b
\end{equation*}%
respectively. Here $\Gamma \left( \alpha \right) $ is the Gamma function and 
$J_{a^{+}}^{0}f\left( x\right) =J_{b^{-}}^{0}f\left( x\right) =f\left(
x\right) .$
\end{definition}

\begin{definition}
\label{d6}Let $f\in L_{1}\left( \left[ a,b\right] \times \left[ c,d\right]
\right) $. the Riemann-Liouville integrals $J_{a^{+},c^{+}}^{\alpha ,\beta },
$ $J_{a^{+},d^{-}}^{\alpha ,\beta },$ $J_{b^{-},c^{+}}^{\alpha ,\beta }$ and 
$J_{b^{-},d^{-}}^{\alpha ,\beta }$ of order $\alpha ,\beta >0$ with $a,c\geq
0$ are defined by%
\begin{eqnarray*}
&&J_{a^{+},c^{+}}^{\alpha ,\beta }f\left( x,y\right)  \\
&& \\
&=&\frac{1}{\Gamma \left( \alpha \right) \Gamma \left( \beta \right) }%
\int_{a}^{x}\int_{c}^{y}\left( x-t\right) ^{\alpha -1}\left( y-s\right)
^{\beta -1}f\left( t,s\right) dsdt,\text{\ }x>a,y>c
\end{eqnarray*}%
\begin{eqnarray*}
&&J_{a^{+},d^{-}}^{\alpha ,\beta }f\left( x,y\right)  \\
&=&\frac{1}{\Gamma \left( \alpha \right) \Gamma \left( \beta \right) }%
\int_{a}^{x}\int_{y}^{d}\left( x-t\right) ^{\alpha -1}\left( s-y\right)
^{\beta -1}f\left( t,s\right) dsdt,\text{ }x>a,y<d
\end{eqnarray*}%
\begin{eqnarray*}
&&J_{b^{-},c^{+}}^{\alpha ,\beta }f\left( x,y\right)  \\
&=&\frac{1}{\Gamma \left( \alpha \right) \Gamma \left( \beta \right) }%
\int_{x}^{b}\int_{c}^{y}\left( t-x\right) ^{\alpha -1}\left( y-s\right)
^{\beta -1}f\left( t,s\right) dsdt,\text{ }x<b,y>c
\end{eqnarray*}%
\begin{eqnarray*}
&&J_{b^{-},d^{-}}^{\alpha ,\beta }f\left( x,y\right)  \\
&=&\frac{1}{\Gamma \left( \alpha \right) \Gamma \left( \beta \right) }%
\int_{x}^{b}\int_{y}^{d}\left( t-x\right) ^{\alpha -1}\left( s-y\right)
^{\beta -1}f\left( t,s\right) dsdt,\text{ }x<b,y<d
\end{eqnarray*}%
respectively. Here, $\Gamma $ is the Gama function,%
\begin{equation*}
J_{a^{+},c^{+}}^{0,0}f\left( x,y\right) =J_{a^{+},d^{-}}^{0,0}f\left(
x,y\right) =J_{b^{-},c^{+}}^{0,0}f\left( x,y\right)
=J_{b^{-},d^{-}}^{0,0}f\left( x,y\right) =f\left( x,y\right) 
\end{equation*}%
and%
\begin{equation*}
J_{a^{+},c^{+}}^{1,1}f\left( x,y\right) =\int_{a}^{x}\int_{c}^{y}f\left(
t,s\right) dsdt.
\end{equation*}
\end{definition}

For some recent results connected with fractional integral inequalities see (%
\cite{2}, \cite{3}, \cite{MzsHy}, \cite{Set}).

In (\cite{M.Z.S}), Sar\i kaya establish the following inequalities of
Hadamard's type for co-ordinated convex mapping on a rectangle from the
plane $%
\mathbb{R}
^{2}$:

\begin{theorem}
\label{t1}Let $f:\Delta \subset 
\mathbb{R}
^{2}\rightarrow 
\mathbb{R}
$ be co-ordinated convex on $\Delta :=\left[ a,b\right] \times \left[ c,d%
\right] $ in $%
\mathbb{R}
^{2}$ with $0\leq a<b$, $0\leq c<d$ and $f\in L_{1}\left( \Delta \right) .$
Then one has the inequalities:%
\begin{eqnarray}
&&  \label{2} \\
&&f\left( \frac{a+b}{2},\frac{c+d}{2}\right)  \notag \\
&&  \notag \\
&\leq &\frac{\Gamma \left( \alpha +1\right) \Gamma \left( \beta +1\right) }{%
4\left( b-a\right) ^{\alpha }\left( d-c\right) ^{\beta }}  \notag \\
&&  \notag \\
&&\left[ J_{a^{+},c^{+}}^{\alpha ,\beta }f\left( b,d\right)
+J_{a^{+},d^{-}}^{\alpha ,\beta }f\left( b,c\right) +J_{b^{-},c^{+}}^{\alpha
,\beta }f\left( a,d\right) +J_{b^{-},d^{-}}^{\alpha ,\beta }f\left(
a,c\right) \right]  \notag \\
&&  \notag \\
&\leq &\frac{f\left( a,c\right) +f\left( a,d\right) +f\left( b,c\right)
+f\left( b,d\right) }{4}.  \notag
\end{eqnarray}
\end{theorem}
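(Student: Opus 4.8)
The plan is to prove the fractional Hermite–Hadamard inequality (\ref{2}) for co-ordinated convex functions by reducing the two-dimensional statement to the one-dimensional case along each coordinate, exactly the way the definition of co-ordinated convexity is built. The key identities I would establish first rewrite each of the four Riemann–Liouville double integrals in Definition \ref{d6} as integrals over the unit square by the substitutions $t = \tfrac{a+b}{2} + s\tfrac{b-a}{2}$-type changes of variables, or more symmetrically by writing $x = ta+(1-t)b$ and $y = kc+(1-k)d$ with $t,k\in[0,1]$. After these substitutions the sum of the four fractional integrals collapses into a single clean expression of the form
\begin{equation*}
\frac{\Gamma(\alpha+1)\Gamma(\beta+1)}{4(b-a)^{\alpha}(d-c)^{\beta}}\left[\text{(sum of four }J\text{'s)}\right]
=\frac{\alpha\beta}{4}\int_{0}^{1}\int_{0}^{1}t^{\alpha-1}k^{\beta-1}\,\Sigma(t,k)\,dk\,dt,
\end{equation*}
where $\Sigma(t,k)$ is the symmetric sum $f(ta+(1-t)b,kc+(1-k)d)+f(ta+(1-t)b,(1-k)c+kd)+f((1-t)a+tb,kc+(1-k)d)+f((1-t)a+tb,(1-k)c+kd)$.

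For the \emph{left} inequality, I would apply co-ordinated convexity to the midpoint. First fix the second coordinate and use midpoint convexity in the first variable to bound $f(\tfrac{a+b}{2},\cdot)$ by the average of the two symmetric evaluations; then do the same in the second variable. Equivalently, one uses the four-point inequality of Definition \ref{d1} with $t=k=\tfrac12$ to get
\begin{equation*}
4\,f\!\left(\tfrac{a+b}{2},\tfrac{c+d}{2}\right)\leq \Sigma(t,k)\quad\text{for every }t,k\in[0,1],
\end{equation*}
and since $\tfrac{\alpha\beta}{4}\int_0^1\int_0^1 t^{\alpha-1}k^{\beta-1}\,dk\,dt=\tfrac14$, integrating this pointwise bound against the weight $t^{\alpha-1}k^{\beta-1}$ immediately yields $f(\tfrac{a+b}{2},\tfrac{c+d}{2})$ on the left.

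For the \emph{right} inequality I would again invoke Definition \ref{d1}: each of the four terms in $\Sigma(t,k)$ is bounded above by a convex combination of the four vertex values $f(a,c),f(a,d),f(b,c),f(b,d)$ with weights that are products of $t,1-t,k,1-k$. Summing the four resulting bounds, the coefficient of each vertex value telescopes to $t+(1-t)=1$ in the first variable and likewise in the second, so that $\Sigma(t,k)\leq f(a,c)+f(a,d)+f(b,c)+f(b,d)$ for every $t,k$, independently of $t$ and $k$. Integrating once more against $\tfrac{\alpha\beta}{4}t^{\alpha-1}k^{\beta-1}$ and using $\tfrac{\alpha\beta}{4}\int_0^1\int_0^1 t^{\alpha-1}k^{\beta-1}\,dk\,dt=\tfrac14$ gives the upper bound $\tfrac{f(a,c)+f(a,d)+f(b,c)+f(b,d)}{4}$. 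The main obstacle I anticipate is purely bookkeeping: correctly matching each of the four Riemann–Liouville operators $J_{a^{+},c^{+}}^{\alpha,\beta},\dots,J_{b^{-},d^{-}}^{\alpha,\beta}$ to its corresponding orientation of the $(t,k)$-substitution so that the four pieces assemble into the single symmetric integral above, and keeping the Gamma-function normalizations straight (using $\int_0^1 t^{\alpha-1}\,dt=1/\alpha$ and $\Gamma(\alpha+1)=\alpha\Gamma(\alpha)$); once the reduction to the unit square is done correctly, both inequalities follow from a single pointwise application of co-ordinated convexity followed by integration against a probability-density weight.
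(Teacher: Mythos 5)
Your proof is correct and follows essentially the same route the paper takes: the paper establishes Theorem \ref{t1} as the case $h(\alpha)=\alpha$ of Theorem \ref{t4}, whose proof is exactly your argument --- apply the four-point co-ordinated convexity inequality pointwise (with $t=k=\tfrac12$ for the left bound, and at the vertices for the right bound), multiply by the weight $t^{\alpha-1}k^{\beta-1}$, integrate over $[0,1]^2$, and convert to the four Riemann--Liouville integrals by the substitutions $x=ta+(1-t)b$, $y=kc+(1-k)d$. Your normalization checks ($\tfrac{\alpha\beta}{4}\int_0^1\int_0^1 t^{\alpha-1}k^{\beta-1}\,dk\,dt=\tfrac14$ and the telescoping of the vertex coefficients to $1$) are exactly the simplifications that occur when $h(t)+h(1-t)=1$.
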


\begin{theorem}
\label{t2}Let $f:\Delta \subset 
\mathbb{R}
^{2}\rightarrow 
\mathbb{R}
$ be a partial differentiable mapping on $\Delta :=\left[ a,b\right] \times %
\left[ c,d\right] $ in $%
\mathbb{R}
^{2}$ with $0\leq a<b$, $0\leq c<d$. If $\left\vert \frac{\partial ^{2}f}{%
\partial t\partial k}\right\vert $ is a convex function on the co-ordinates
on $\Delta $, then one has the inequalities:%
\begin{eqnarray}
&&  \label{3} \\
&&\left\vert \frac{f\left( a,c\right) +f\left( a,d\right) +f\left(
b,c\right) +f\left( b,d\right) }{4}\right.  \notag \\
&&  \notag \\
&&\left. +\frac{\Gamma \left( \alpha +1\right) \Gamma \left( \beta +1\right) 
}{4\left( b-a\right) ^{\alpha }\left( d-c\right) ^{\beta }}\right.  \notag \\
&&  \notag \\
&&\left. \times \left[ J_{a^{+},c^{+}}^{\alpha ,\beta }f\left( b,d\right)
+J_{a^{+},d^{-}}^{\alpha ,\beta }f\left( b,c\right) +J_{b^{-},c^{+}}^{\alpha
,\beta }f\left( a,d\right) +J_{b^{-},d^{-}}^{\alpha ,\beta }f\left(
a,c\right) \right] -A\right\vert  \notag \\
&&  \notag \\
&\leq &\frac{\left( b-a\right) \left( d-c\right) }{4\left( \alpha +1\right)
\left( \beta +1\right) }  \notag \\
&&  \notag \\
&&\times \left( \left\vert \frac{\partial ^{2}f}{\partial k\partial t}\left(
a,c\right) \right\vert +\left\vert \frac{\partial ^{2}f}{\partial k\partial t%
}\left( a,d\right) \right\vert +\left\vert \frac{\partial ^{2}f}{\partial
k\partial t}\left( b,c\right) \right\vert +\left\vert \frac{\partial ^{2}f}{%
\partial k\partial t}\left( b,d\right) \right\vert \right)  \notag
\end{eqnarray}%
where%
\begin{eqnarray*}
A &=&\frac{\Gamma \left( \beta +1\right) }{4\left( d-c\right) ^{\beta }}%
\left[ J_{c^{+}}^{\beta }f\left( a,d\right) +J_{c^{+}}^{\beta }f\left(
b,d\right) +J_{d^{-}}^{\beta }f\left( a,c\right) +J_{d^{-}}^{\beta }f\left(
b,c\right) \right] \\
&& \\
&&+\frac{\Gamma \left( \alpha +1\right) }{4\left( b-a\right) ^{\alpha }}%
\left[ J_{a^{+}}^{\alpha }f\left( b,c\right) +J_{a^{+}}^{\alpha }f\left(
b,d\right) +J_{b^{-}}^{\alpha }f\left( a,c\right) +J_{b^{-}}^{\alpha
}f\left( a,d\right) \right] .
\end{eqnarray*}
\end{theorem}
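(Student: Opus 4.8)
The plan is to deduce \eqref{3} from a representation identity rather than to manipulate the fractional integrals directly. Concretely, I would invoke the Sar\i kaya integral identity from \cite{M.Z.S} (the ``identity obtained by Sar\i kaya'' announced in the abstract), which rewrites the whole bracketed quantity on the left of \eqref{3} as a single weighted double integral of the mixed partial derivative. Writing $x(t)=ta+(1-t)b$ and $y(k)=kc+(1-k)d$, the identity reads
\begin{eqnarray*}
&&\frac{f(a,c)+f(a,d)+f(b,c)+f(b,d)}{4}+\frac{\Gamma(\alpha+1)\Gamma(\beta+1)}{4(b-a)^{\alpha}(d-c)^{\beta}}\left[J_{a^{+},c^{+}}^{\alpha,\beta}f(b,d)+\cdots\right]-A \\
&=&\frac{(b-a)(d-c)}{4}\int_{0}^{1}\int_{0}^{1}\left((1-t)^{\alpha}-t^{\alpha}\right)\left((1-k)^{\beta}-k^{\beta}\right)\frac{\partial^{2}f}{\partial k\partial t}\left(x(t),y(k)\right)dk\,dt,
\end{eqnarray*}
the cross term $A$ being precisely what appears when the one-variable Sar\i kaya kernel is applied successively in $t$ and then in $k$: the product of the two corner-average pieces gives the $\tfrac14\sum$ of corner values, the product of the two fractional pieces gives the $2$-dimensional fractional term, and the two mixed products give $-A$. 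I would take this identity as my starting point; proving it (by iterated integration by parts, or as the tensor product of the known one-variable identity) is the real crux, and is where all the sign bookkeeping lives.

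Granting the identity, I would pass to absolute values and move the modulus inside the double integral by the triangle inequality, then bound the kernels crudely by
\begin{equation*}
\left|(1-t)^{\alpha}-t^{\alpha}\right|\leq (1-t)^{\alpha}+t^{\alpha},\qquad \left|(1-k)^{\beta}-k^{\beta}\right|\leq (1-k)^{\beta}+k^{\beta}.
\end{equation*}
This loosening is exactly what yields the clean constant in \eqref{3}: retaining the sharp kernel $\left|(1-t)^{\alpha}-t^{\alpha}\right|$ would instead produce the strictly smaller factor $\tfrac{1}{\alpha+1}\left(1-2^{-\alpha}\right)$, so the coarser estimate is deliberate.

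Next I would use the hypothesis that $\left|\frac{\partial^{2}f}{\partial k\partial t}\right|$ is convex on the co-ordinates (the $h(\lambda)=\lambda$ instance of Definition \ref{d1}) to estimate the integrand at the interior point $(x(t),y(k))$ by the bilinear combination of its four corner values,
\begin{eqnarray*}
\left|\frac{\partial^{2}f}{\partial k\partial t}\left(x(t),y(k)\right)\right| &\leq& tk\left|\frac{\partial^{2}f}{\partial k\partial t}(a,c)\right|+t(1-k)\left|\frac{\partial^{2}f}{\partial k\partial t}(a,d)\right| \\
&&+(1-t)k\left|\frac{\partial^{2}f}{\partial k\partial t}(b,c)\right|+(1-t)(1-k)\left|\frac{\partial^{2}f}{\partial k\partial t}(b,d)\right|.
\end{eqnarray*}
Because the kernel now factors as a product of a $t$-function and a $k$-function, each of the four corner contributions splits into a product of two one-variable integrals.

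Finally I would evaluate those by the Beta function, the key computation being
\begin{equation*}
\int_{0}^{1}\left((1-t)^{\alpha}+t^{\alpha}\right)t\,dt=\frac{1}{(\alpha+1)(\alpha+2)}+\frac{1}{\alpha+2}=\frac{1}{\alpha+1},
\end{equation*}
while the symmetry $t\mapsto 1-t$ gives the same value for the weight $(1-t)$, and likewise in the $k$-variable with $\beta$. Hence all four corner terms carry the identical factor $\frac{1}{(\alpha+1)(\beta+1)}$, and summing them against the prefactor $\frac{(b-a)(d-c)}{4}$ reproduces the right-hand side of \eqref{3} exactly. The only genuine obstacle is establishing the representation identity in the first step; once it is in hand, the inequality follows from the triangle inequality, co-ordinated convexity, and these elementary Beta-integral evaluations.
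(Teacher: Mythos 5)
Your proposal is correct and follows essentially the same route as the paper: Theorem \ref{t2} is recovered there as the $h(\alpha)=\alpha$ case of Theorem \ref{t5} (see Remark \ref{r2}), whose proof is exactly your argument --- start from the Sar\i kaya identity of Lemma \ref{l1}, apply the triangle inequality, bound the integrand by co-ordinated convexity, and evaluate the resulting one-variable integrals, each equal to $\frac{1}{\alpha+1}$ or $\frac{1}{\beta+1}$. Your factored kernel $\left((1-t)^{\alpha}-t^{\alpha}\right)\left((1-k)^{\beta}-k^{\beta}\right)$ is the correct reading of that identity (the sign of the last term as printed in Lemma \ref{l1} is a typo), so no gap remains.
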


\begin{theorem}
\label{t3}Let $f:\Delta \subset 
\mathbb{R}
^{2}\rightarrow 
\mathbb{R}
$ be a partial differentiable mapping on $\Delta :=\left[ a,b\right] \times %
\left[ c,d\right] $ in $%
\mathbb{R}
^{2}$ with $0\leq a<b$, $0\leq c<d$. If $\left\vert \frac{\partial ^{2}f}{%
\partial t\partial k}\right\vert ^{q},q>1,$ is a convex function on the
co-ordinates on $\Delta $, then one has the inequalities:%
\begin{eqnarray}
&&  \label{4} \\
&&\left\vert \frac{f\left( a,c\right) +f\left( a,d\right) +f\left(
b,c\right) +f\left( b,d\right) }{4}\right.  \notag \\
&&  \notag \\
&&+\left\{ \frac{\Gamma \left( \alpha +1\right) \Gamma \left( \beta
+1\right) }{4\left( b-a\right) ^{\alpha }\left( d-c\right) ^{\beta }}\right.
\notag \\
&&  \notag \\
&&\left. \times \left[ J_{a^{+},c^{+}}^{\alpha ,\beta }f\left( b,d\right)
+J_{a^{+},d^{-}}^{\alpha ,\beta }f\left( b,c\right) +J_{b^{-},c^{+}}^{\alpha
,\beta }f\left( a,d\right) +J_{b^{-},d^{-}}^{\alpha ,\beta }f\left(
a,c\right) \right] \right\} -A  \notag \\
&&  \notag \\
&\leq &\frac{\left( b-a\right) \left( d-c\right) }{\left[ \left( \alpha
p+1\right) \left( \beta p+1\right) \right] ^{\frac{1}{p}}}\left( \frac{1}{4}%
\right) ^{\frac{1}{q}}  \notag \\
&&\times \left( \left\vert \frac{\partial ^{2}f}{\partial k\partial t}\left(
a,c\right) \right\vert ^{q}+\left\vert \frac{\partial ^{2}f}{\partial
k\partial t}\left( a,d\right) \right\vert ^{q}+\left\vert \frac{\partial
^{2}f}{\partial k\partial t}\left( b,c\right) \right\vert ^{q}+\left\vert 
\frac{\partial ^{2}f}{\partial k\partial t}\left( b,d\right) \right\vert
^{q}\right) ^{\frac{1}{q}}  \notag
\end{eqnarray}%
where%
\begin{eqnarray*}
A &=&\frac{\Gamma \left( \beta +1\right) }{4\left( d-c\right) ^{\beta }}%
\left[ J_{c^{+}}^{\beta }f\left( a,d\right) +J_{c^{+}}^{\beta }f\left(
b,d\right) +J_{d^{-}}^{\beta }f\left( a,c\right) +J_{d^{-}}^{\beta }f\left(
b,c\right) \right] \\
&& \\
&&+\frac{\Gamma \left( \alpha +1\right) }{4\left( b-a\right) ^{\alpha }}%
\left[ J_{a^{+}}^{\alpha }f\left( b,c\right) +J_{a^{+}}^{\alpha }f\left(
b,d\right) +J_{b^{-}}^{\alpha }f\left( a,c\right) +J_{b^{-}}^{\alpha
}f\left( a,d\right) \right]
\end{eqnarray*}%
and $\frac{1}{p}+\frac{1}{q}=1.$
\end{theorem}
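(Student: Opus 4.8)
The plan is to run essentially the same argument as for Theorem \ref{t2}, replacing the crude modulus estimate used there by H\"{o}lder's inequality; this is what forces the conjugate exponent $p$ and the factor $\left( \frac{1}{4}\right) ^{1/q}$ to appear. First I would invoke the integral identity of Sar\i kaya that already underlies Theorem \ref{t2}: the entire bracketed quantity on the left-hand side (the corner average plus the mixed fractional integrals, minus $A$) can be written as
\begin{equation*}
\frac{\left( b-a\right) \left( d-c\right) }{4}\int_{0}^{1}\int_{0}^{1}\left(
\left( 1-t\right) ^{\alpha }-t^{\alpha }\right) \left( \left( 1-k\right)
^{\beta }-k^{\beta }\right) \frac{\partial ^{2}f}{\partial k\partial t}\bigl(
ta+(1-t)b,\,kc+(1-k)d\bigr) \,dk\,dt.
\end{equation*}
Taking absolute values and pushing the modulus inside the double integral by the triangle inequality reduces the problem to estimating $\int_{0}^{1}\int_{0}^{1}\left\vert K_{\alpha }(t)\right\vert \left\vert K_{\beta }(k)\right\vert \bigl\vert \frac{\partial ^{2}f}{\partial k\partial t}(\cdots )\bigr\vert \,dk\,dt$, where $K_{\alpha }(t)=\left( 1-t\right) ^{\alpha }-t^{\alpha }$ and $K_{\beta }(k)=\left( 1-k\right) ^{\beta }-k^{\beta }$.

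Next I would apply H\"{o}lder's inequality on $\left[ 0,1\right] ^{2}$ with exponents $p$ and $q$, separating the kernel from the derivative factor. For the kernel factor I would write $\int_{0}^{1}\int_{0}^{1}\left\vert K_{\alpha }(t)\right\vert ^{p}\left\vert K_{\beta }(k)\right\vert ^{p}\,dk\,dt$ as a product of one-variable integrals, and bound each one via $\left\vert \left( 1-t\right) ^{\alpha }-t^{\alpha }\right\vert \leq \left( 1-t\right) ^{\alpha }+t^{\alpha }$ followed by the convexity inequality $\left( u+v\right) ^{p}\leq 2^{p-1}\left( u^{p}+v^{p}\right) $; integrating the resulting terms $\left( 1-t\right) ^{\alpha p}$ and $t^{\alpha p}$ then gives $\int_{0}^{1}\left\vert K_{\alpha }(t)\right\vert ^{p}\,dt\leq \frac{2^{p}}{\alpha p+1}$, and likewise in $k$. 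For the derivative factor I would use the hypothesis that $\bigl\vert \frac{\partial ^{2}f}{\partial t\partial k}\bigr\vert ^{q}$ is convex on the co-ordinates to bound the integrand by $tk\left\vert \cdots (a,c)\right\vert ^{q}+t(1-k)\left\vert \cdots (a,d)\right\vert ^{q}+(1-t)k\left\vert \cdots (b,c)\right\vert ^{q}+(1-t)(1-k)\left\vert \cdots (b,d)\right\vert ^{q}$; since each of the four monomials integrates to $\frac{1}{4}$ over $\left[ 0,1\right] ^{2}$, the $q$-integral is at most $\frac{1}{4}$ times the sum of the four corner values of $\bigl\vert \frac{\partial ^{2}f}{\partial k\partial t}\bigr\vert ^{q}$.

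Finally I would assemble the pieces: the kernel contributes $\left( \frac{2^{p}}{\alpha p+1}\cdot \frac{2^{p}}{\beta p+1}\right) ^{1/p}=\frac{4}{\left[ \left( \alpha p+1\right) \left( \beta p+1\right) \right] ^{1/p}}$, the derivative contributes $\left( \frac{1}{4}\right) ^{1/q}$ times the $q$-norm of the four corner values, and the prefactor $\frac{\left( b-a\right) \left( d-c\right) }{4}$ coming from the identity cancels the factor $4$ produced by the kernel estimate, yielding exactly the claimed right-hand side. The main obstacle is this kernel estimate: one must select the two elementary inequalities ($\left\vert K_{\alpha }\right\vert \leq \left( 1-t\right) ^{\alpha }+t^{\alpha }$ and $\left( u+v\right) ^{p}\leq 2^{p-1}\left( u^{p}+v^{p}\right) $) precisely so that the constants recombine into the stated $\frac{1}{\left[ \left( \alpha p+1\right) \left( \beta p+1\right) \right] ^{1/p}}$; keeping careful track of the factor $2^{p-1}$, the two powers of $2$, and the $\frac{1}{4}$ from the identity is where an arithmetic slip would most easily enter. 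Everything else is a routine application of H\"{o}lder's inequality and elementary integration.
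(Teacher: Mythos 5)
Your argument is correct and reaches exactly the stated bound, and it follows the same overall architecture as the paper's proof of the $h$-convex version (Theorem \ref{t6}, which yields this statement on taking $h\left( \alpha \right) =\alpha $, cf.\ Remark \ref{r3}): start from the identity of Lemma \ref{l1}, apply H\"{o}lder's inequality with exponents $p,q$, and then use co-ordinate convexity of $\left\vert \frac{\partial ^{2}f}{\partial t\partial k}\right\vert ^{q}$ together with $\int_{0}^{1}\int_{0}^{1}tk\,dkdt=\frac{1}{4}$ to produce the factor $\left( \frac{1}{4}\right) ^{1/q}$. The one place you genuinely diverge is the kernel estimate. The paper first applies the triangle inequality to split the right-hand side of Lemma \ref{l1} into four integrals with the nonnegative kernels $t^{\alpha }k^{\beta }$, $\left( 1-t\right) ^{\alpha }k^{\beta }$, $t^{\alpha }\left( 1-k\right) ^{\beta }$, $\left( 1-t\right) ^{\alpha }\left( 1-k\right) ^{\beta }$, applies H\"{o}lder to each, and uses the exact value $\int_{0}^{1}\int_{0}^{1}t^{p\alpha }k^{p\beta }dkdt=\frac{1}{\left( \alpha p+1\right) \left( \beta p+1\right) }$, summing four equal contributions to obtain $\frac{4}{\left[ \left( \alpha p+1\right) \left( \beta p+1\right) \right] ^{1/p}}$. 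You instead keep the kernel in the factored form $K_{\alpha }\left( t\right) K_{\beta }\left( k\right) $, apply H\"{o}lder once, and bound $\int_{0}^{1}\left\vert K_{\alpha }\right\vert ^{p}dt\leq \frac{2^{p}}{\alpha p+1}$ via $\left\vert K_{\alpha }\right\vert \leq \left( 1-t\right) ^{\alpha }+t^{\alpha }$ and $\left( u+v\right) ^{p}\leq 2^{p-1}\left( u^{p}+v^{p}\right) $; your arithmetic is right and recombines to the same constant, with the prefactor $\frac{\left( b-a\right) \left( d-c\right) }{4}$ cancelling the $4$ exactly as you say. Two small remarks: the right-hand side of Lemma \ref{l1} as printed carries a minus sign on the fourth integral and so does not literally factor as you write it --- this is evidently a sign slip in the paper (the case $\alpha =\beta =1$ confirms the product kernel is the correct one), and in any case the discrepancy is immaterial once absolute values and the triangle inequality are applied; and your single application of H\"{o}lder is, if anything, the tidier route, since $\left\vert K_{\alpha }\left( t\right) K_{\beta }\left( k\right) \right\vert $ is pointwise dominated by the sum of the four monomial kernels the paper uses.
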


In order to prove our main results we need the following lemma (see \cite%
{M.Z.S}).

\begin{lemma}
\label{l1}Let $f:\Delta \subset 
\mathbb{R}
^{2}\rightarrow 
\mathbb{R}
$ be a partial differentiable mapping on $\Delta :=\left[ a,b\right] \times %
\left[ c,d\right] $ in $%
\mathbb{R}
^{2}$ with $0\leq a<b$, $0\leq c<d$. If $\left. \frac{\partial ^{2}f}{%
\partial t\partial k}\in L\left( \Delta \right) \right. ,$ then the
following equality holds:%
\begin{eqnarray}
&&  \label{5} \\
&&\frac{f\left( a,c\right) +f\left( a,d\right) +f\left( b,c\right) +f\left(
b,d\right) }{4}  \notag \\
&&  \notag \\
&&+\left\{ \frac{\Gamma \left( \alpha +1\right) \Gamma \left( \beta
+1\right) }{4\left( b-a\right) ^{\alpha }\left( d-c\right) ^{\beta }}\right. 
\notag \\
&&  \notag \\
&&\times \left. \left[ J_{a^{+},c^{+}}^{\alpha ,\beta }f\left( b,d\right)
+J_{a^{+},d^{-}}^{\alpha ,\beta }f\left( b,c\right) +J_{b^{-},c^{+}}^{\alpha
,\beta }f\left( a,d\right) +J_{b^{-},d^{-}}^{\alpha ,\beta }f\left(
a,c\right) \right] \right\}   \notag \\
&&  \notag \\
&&-\frac{\Gamma \left( \beta +1\right) }{4\left( d-c\right) ^{\beta }}\left[
J_{c^{+}}^{\beta }f\left( a,d\right) +J_{c^{+}}^{\beta }f\left( b,d\right)
+J_{d^{-}}^{\beta }f\left( a,c\right) +J_{d^{-}}^{\beta }f\left( b,c\right) %
\right]   \notag \\
&&  \notag \\
&&-\frac{\Gamma \left( \alpha +1\right) }{4\left( b-a\right) ^{\alpha }}%
\left[ J_{a^{+}}^{\alpha }f\left( b,c\right) +J_{a^{+}}^{\alpha }f\left(
b,d\right) +J_{b^{-}}^{\alpha }f\left( a,c\right) +J_{b^{-}}^{\alpha
}f\left( a,d\right) \right]   \notag
\end{eqnarray}%
\begin{eqnarray*}
&=&\frac{\left( b-a\right) \left( d-c\right) }{4}\left\{
\int_{0}^{1}\int_{0}^{1}t^{\alpha }k^{\beta }\frac{\partial ^{2}f}{\partial
t\partial k}\left( ta+\left( 1-t\right) b,kc+\left( 1-k\right) d\right)
dkdt\right.  \\
&& \\
&&-\int_{0}^{1}\int_{0}^{1}\left( 1-t\right) ^{\alpha }k^{\beta }\frac{%
\partial ^{2}f}{\partial t\partial k}\left( ta+\left( 1-t\right) b,kc+\left(
1-k\right) d\right) dkdt \\
&& \\
&&-\int_{0}^{1}\int_{0}^{1}t^{\alpha }\left( 1-k\right) ^{\beta }\frac{%
\partial ^{2}f}{\partial t\partial k}\left( ta+\left( 1-t\right) b,kc+\left(
1-k\right) d\right) dkdt \\
&& \\
&&\left. -\int_{0}^{1}\int_{0}^{1}\left( 1-t\right) ^{\alpha }\left(
1-k\right) ^{\beta }\frac{\partial ^{2}f}{\partial t\partial k}\left(
ta+\left( 1-t\right) b,kc+\left( 1-k\right) d\right) dkdt\right\} .
\end{eqnarray*}
\end{lemma}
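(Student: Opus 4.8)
The plan is to establish the identity by evaluating its right-hand side, reducing each of the four double integrals to corner values of $f$ and to Riemann--Liouville integrals by means of a two-fold integration by parts, once in the variable $t$ and once in $k$. Throughout I read $\frac{\partial ^{2}f}{\partial t\partial k}$ as the mixed second partial of $f$ in its two arguments, evaluated at $\left( ta+\left( 1-t\right) b,kc+\left( 1-k\right) d\right) $, and I abbreviate $u=ta+\left( 1-t\right) b$ and $v=kc+\left( 1-k\right) d$. The computational engine is the elementary one--variable rule
\begin{equation*}
\int_{0}^{1}t^{\alpha }\varphi ^{\prime }\left( t\right) dt=\varphi \left( 1\right) -\alpha \int_{0}^{1}t^{\alpha -1}\varphi \left( t\right) dt,\qquad \int_{0}^{1}\left( 1-t\right) ^{\alpha }\varphi ^{\prime }\left( t\right) dt=-\varphi \left( 0\right) +\alpha \int_{0}^{1}\left( 1-t\right) ^{\alpha -1}\varphi \left( t\right) dt,
\end{equation*}
valid for $\alpha >0$, together with the identical statement in $k$ with exponent $\beta $; the boundary term at the endpoint carrying the vanishing power is discarded.

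Concretely, I would treat the first integral $I_{1}=\int_{0}^{1}\int_{0}^{1}t^{\alpha }k^{\beta }\frac{\partial ^{2}f}{\partial t\partial k}\left( u,v\right) dkdt$ as the model case. Writing $\frac{\partial }{\partial t}\left[ f_{v}\left( u,v\right) \right] =\left( a-b\right) \frac{\partial ^{2}f}{\partial t\partial k}\left( u,v\right) $, an integration by parts in $t$ replaces the $t^{\alpha }$--weight by the endpoint value $f_{v}\left( a,v\right) $ and a $t^{\alpha -1}$--integral; a second integration by parts in $k$, using $\frac{\partial }{\partial k}\left[ f\left( u,v\right) \right] =\left( c-d\right) f_{v}\left( u,v\right) $, then yields four pieces: a corner value $f\left( a,c\right) $, two cross terms carrying a single surviving power $t^{\alpha -1}$ or $k^{\beta -1}$, and one term carrying both reduced powers $t^{\alpha -1}k^{\beta -1}$. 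The substitutions $x=u$, $y=v$ (so that $t=\frac{b-x}{b-a}$, $k=\frac{d-y}{d-c}$, with Jacobian $\frac{1}{\left( a-b\right) \left( c-d\right) }$) then convert these into fractional integrals: the doubly reduced integral becomes a multiple of $J_{a^{+},c^{+}}^{\alpha ,\beta }f\left( b,d\right) $, the two cross terms become multiples of the one--dimensional $J_{a^{+}}^{\alpha }f\left( b,c\right) $ and $J_{c^{+}}^{\beta }f\left( a,d\right) $, and the constants $\left( a-b\right) ^{-1}$, $\left( c-d\right) ^{-1}$, $\alpha \Gamma \left( \alpha \right) =\Gamma \left( \alpha +1\right) $, $\beta \Gamma \left( \beta \right) =\Gamma \left( \beta +1\right) $ combine into the gamma/length normalisations on the left. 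The integrals $I_{2},I_{3},I_{4}$, with weights $\left( 1-t\right) ^{\alpha }k^{\beta }$, $t^{\alpha }\left( 1-k\right) ^{\beta }$, $\left( 1-t\right) ^{\alpha }\left( 1-k\right) ^{\beta }$, are handled verbatim and select respectively the corner $\left( b,c\right) $ with $J_{b^{-},c^{+}}^{\alpha ,\beta }$, the corner $\left( a,d\right) $ with $J_{a^{+},d^{-}}^{\alpha ,\beta }$, and the corner $\left( b,d\right) $ with $J_{b^{-},d^{-}}^{\alpha ,\beta }$, along with the matching one--sided operators $J_{b^{-}}^{\alpha }$ and $J_{d^{-}}^{\beta }$.

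The final step is to form $I_{1}-I_{2}-I_{3}-I_{4}$, multiply by $\frac{\left( b-a\right) \left( d-c\right) }{4}$, and collect. The four corner values all acquire the coefficient $+\frac{1}{4}$ and sum to $\frac{f\left( a,c\right) +f\left( a,d\right) +f\left( b,c\right) +f\left( b,d\right) }{4}$; the four doubly reduced contributions assemble into the bracketed $J^{\alpha ,\beta }$ block with the prefactor $\frac{\Gamma \left( \alpha +1\right) \Gamma \left( \beta +1\right) }{4\left( b-a\right) ^{\alpha }\left( d-c\right) ^{\beta }}$; and the eight cross terms assemble into the two $A$--type sums, each carrying an overall minus sign. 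I expect the only real difficulty to be bookkeeping: one must track the signs generated by the endpoint evaluations ($+\varphi \left( 1\right) $ against $-\varphi \left( 0\right) $) together with the signs of the chain factors $\left( a-b\right) $ and $\left( c-d\right) $, and check that the minus signs attached to $I_{2},I_{3},I_{4}$ precisely compensate these so that every corner and every double fractional integral survives with the sign displayed on the left, while each cross term lands in the correct $A$--block. Since each of the sixteen elementary pieces is a direct application of the one--variable rule followed by a linear change of variable, no step is analytically delicate; the entire content lies in this combinatorial accounting of the four integrals against the four corners and the $J^{\alpha }$, $J^{\beta }$, $J^{\alpha ,\beta }$ families.
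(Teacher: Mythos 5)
Your overall strategy is the right one, and it is in fact the only proof available: the paper does not prove Lemma \ref{l1} at all, it merely cites \cite{M.Z.S}, and the argument there is exactly the double integration by parts followed by the affine change of variables that you describe. Your treatment of $I_{1}$ is correct in every detail: one does get the corner $f\left( a,c\right) $, the two one--dimensional terms $\frac{\Gamma \left( \alpha +1\right) }{\left( b-a\right) ^{\alpha }}J_{a^{+}}^{\alpha }f\left( b,c\right) $ and $\frac{\Gamma \left( \beta +1\right) }{\left( d-c\right) ^{\beta }}J_{c^{+}}^{\beta }f\left( a,d\right) $ with minus signs, and the double term $\frac{\Gamma \left( \alpha +1\right) \Gamma \left( \beta +1\right) }{\left( b-a\right) ^{\alpha }\left( d-c\right) ^{\beta }}J_{a^{+},c^{+}}^{\alpha ,\beta }f\left( b,d\right) $ with a plus sign, after cancelling the factor $\left( b-a\right) \left( d-c\right) $ coming from the chain rule against the prefactor.

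The gap is precisely in the step you defer, namely the claim that forming $I_{1}-I_{2}-I_{3}-I_{4}$ makes ``every corner and every double fractional integral survive with the sign displayed on the left.'' Carrying out your own recipe for $I_{4}$ (weight $\left( 1-t\right) ^{\alpha }\left( 1-k\right) ^{\beta }$, so that \emph{both} endpoint evaluations produce a $-\varphi \left( 0\right) $ rather than a $+\varphi \left( 1\right) $) gives
\begin{equation*}
\left( b-a\right) \left( d-c\right) I_{4}=f\left( b,d\right) -\frac{\Gamma \left( \beta +1\right) }{\left( d-c\right) ^{\beta }}J_{d^{-}}^{\beta }f\left( b,c\right) -\frac{\Gamma \left( \alpha +1\right) }{\left( b-a\right) ^{\alpha }}J_{b^{-}}^{\alpha }f\left( a,d\right) +\frac{\Gamma \left( \alpha +1\right) \Gamma \left( \beta +1\right) }{\left( b-a\right) ^{\alpha }\left( d-c\right) ^{\beta }}J_{b^{-},d^{-}}^{\alpha ,\beta }f\left( a,c\right) ,
\end{equation*}
i.e.\ the same sign pattern as $I_{1}$, not the opposite one. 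Hence $-\frac{\left( b-a\right) \left( d-c\right) }{4}I_{4}$ contributes $-\frac{1}{4}f\left( b,d\right) $, $-\frac{1}{4}\cdot \frac{\Gamma \left( \alpha +1\right) \Gamma \left( \beta +1\right) }{\left( b-a\right) ^{\alpha }\left( d-c\right) ^{\beta }}J_{b^{-},d^{-}}^{\alpha ,\beta }f\left( a,c\right) $ and \emph{plus} signs on the two one--dimensional terms --- all four opposite to what the displayed left--hand side of (\ref{5}) requires. The combination your method actually produces is $I_{1}-I_{2}-I_{3}+I_{4}$, that is, the product kernel $\left( t^{\alpha }-\left( 1-t\right) ^{\alpha }\right) \left( k^{\beta }-\left( 1-k\right) ^{\beta }\right) $, which for $\alpha =\beta =1$ correctly reduces to the classical kernel $\left( 1-2t\right) \left( 1-2k\right) $, whereas the printed kernel $t^{\alpha }k^{\beta }-\left( 1-t\right) ^{\alpha }k^{\beta }-t^{\alpha }\left( 1-k\right) ^{\beta }-\left( 1-t\right) ^{\alpha }\left( 1-k\right) ^{\beta }$ reduces to $2tk-1$ and cannot yield a symmetric identity. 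So the statement as printed carries a sign error in its last integral; your proof scheme is sound, but you must actually perform the bookkeeping you postpone, record that the fourth integral enters with $+$, and not assert that the displayed signs come out as written.
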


\section{MAIN RESULTS}

\begin{theorem}
\label{t4}Let $f:\Delta \subset 
\mathbb{R}
^{2}\rightarrow 
\mathbb{R}
$ be h-convex function on the co-ordinates on $\Delta :=\left[ a,b\right]
\times \left[ c,d\right] $ in $%
\mathbb{R}
^{2}$ and $f\in L_{2}\left( \Delta \right) .$ The one has the inequalities:%
\begin{eqnarray}
&&  \label{6} \\
&&f\left( \frac{a+b}{2},\frac{c+d}{2}\right)  \notag \\
&&  \notag \\
&\leq &\left[ h\left( \frac{1}{2}\right) \right] ^{2}\frac{\Gamma \left(
\alpha +1\right) \Gamma \left( \beta +1\right) }{\left( b-a\right) ^{\alpha
}\left( d-c\right) ^{\beta }}  \notag \\
&&  \notag \\
&&\times \left[ J_{a^{+},c^{+}}^{\alpha ,\beta }f\left( b,d\right)
+J_{a^{+},d^{-}}^{\alpha ,\beta }f\left( b,c\right) +J_{b^{-},c^{+}}^{\alpha
,\beta }f\left( a,d\right) +J_{b^{-},d^{-}}^{\alpha ,\beta }f\left(
a,c\right) \right]  \notag \\
&&  \notag \\
&\leq &\left[ h\left( \frac{1}{2}\right) \right] ^{2}\alpha \beta \left[
f\left( a,c\right) +f\left( a,d\right) +f\left( b,c\right) +f\left(
b,d\right) \right]  \notag \\
&&  \notag \\
&&\times \left[ \int_{0}^{1}\int_{0}^{1}t^{\alpha -1}k^{\alpha -1}\right. 
\notag \\
&&  \notag \\
&&\left. \times \left[ h\left( t\right) h\left( k\right) +h\left( t\right)
h\left( 1-k\right) +h\left( 1-t\right) h\left( k\right) +h\left( 1-t\right)
h\left( 1-k\right) \right] dkdt\right] .  \notag
\end{eqnarray}
\end{theorem}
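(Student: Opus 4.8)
The plan is to convert every Riemann--Liouville integral appearing in \eqref{6} into an integral over the unit square $[0,1]^{2}$ and then to invoke the co-ordinate $h$-convexity of Definition \ref{d4} twice: at the midpoint for the first inequality and at the four corners of $\Delta$ for the second. Concretely, I would first apply the change of variables $u=ta+(1-t)b$, $v=kc+(1-k)d$, under which $b-u=t(b-a)$, $u-a=(1-t)(b-a)$, $d-v=k(d-c)$ and $v-c=(1-k)(d-c)$. Writing $F(t,k):=f(ta+(1-t)b,kc+(1-k)d)$, each of the four integrals of Definition \ref{d6} becomes a power-weighted integral over $[0,1]^{2}$; for instance
\[
J_{a^{+},c^{+}}^{\alpha ,\beta }f(b,d)=\frac{(b-a)^{\alpha }(d-c)^{\beta }}{\Gamma (\alpha )\Gamma (\beta )}\int_{0}^{1}\int_{0}^{1}t^{\alpha -1}k^{\beta -1}F(t,k)\,dk\,dt,
\]
while $J_{a^{+},d^{-}}^{\alpha ,\beta }f(b,c)$, $J_{b^{-},c^{+}}^{\alpha ,\beta }f(a,d)$ and $J_{b^{-},d^{-}}^{\alpha ,\beta }f(a,c)$ carry the weights $t^{\alpha -1}(1-k)^{\beta -1}$, $(1-t)^{\alpha -1}k^{\beta -1}$ and $(1-t)^{\alpha -1}(1-k)^{\beta -1}$ respectively. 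Using $\Gamma (\alpha +1)=\alpha \Gamma (\alpha )$, the middle member of \eqref{6} then equals $[h(\frac12)]^{2}\,\alpha \beta \int_{0}^{1}\int_{0}^{1}[t^{\alpha -1}+(1-t)^{\alpha -1}][k^{\beta -1}+(1-k)^{\beta -1}]F(t,k)\,dk\,dt$.

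For the first inequality I would use the decomposition $\frac{a+b}{2}=\frac12[ta+(1-t)b]+\frac12[(1-t)a+tb]$ and $\frac{c+d}{2}=\frac12[kc+(1-k)d]+\frac12[(1-k)c+kd]$, and apply Definition \ref{d4} with $t=k=\frac12$ to these four points, which gives $f(\frac{a+b}{2},\frac{c+d}{2})\le [h(\frac12)]^{2}[F(t,k)+F(1-t,k)+F(t,1-k)+F(1-t,1-k)]$ for all $(t,k)$. Integrating against $t^{\alpha -1}k^{\beta -1}$ produces $\frac{1}{\alpha \beta}f(\frac{a+b}{2},\frac{c+d}{2})$ on the left, while the reflections $t\mapsto 1-t$ and $k\mapsto 1-k$ turn the four integrals on the right into the four fractional integrals identified above; multiplying by $\alpha \beta$ yields the first inequality.

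For the second inequality I would instead bound $F(t,k)$ from above by applying Definition \ref{d4} with the corners $(a,c),(b,c),(a,d),(b,d)$, namely $F(t,k)\le h(t)h(k)f(a,c)+h(1-t)h(k)f(b,c)+h(t)h(1-k)f(a,d)+h(1-t)h(1-k)f(b,d)$. Substituting this into the symmetric weight $[t^{\alpha -1}+(1-t)^{\alpha -1}][k^{\beta -1}+(1-k)^{\beta -1}]$ and integrating, the decisive point is that this weight is invariant under each of the reflections $t\mapsto 1-t$ and $k\mapsto 1-k$; hence after the appropriate substitution each of the four corner terms contributes the same integral $\int_{0}^{1}\int_{0}^{1}t^{\alpha -1}k^{\beta -1}[h(t)h(k)+h(t)h(1-k)+h(1-t)h(k)+h(1-t)h(1-k)]\,dk\,dt$, so that the four terms factor out $f(a,c)+f(a,d)+f(b,c)+f(b,d)$. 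This is precisely the second inequality (here the exponent $k^{\alpha -1}$ printed in \eqref{6} should read $k^{\beta -1}$).

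The change of variables and the Gamma-function bookkeeping are routine; the genuinely delicate step is the symmetry argument for the second inequality, where one must verify that the reflections act compatibly on both the power weight and the product of $h$-factors so that all four endpoint contributions collapse onto a single symmetric integral. I expect the easiest place to slip is the orientation bookkeeping in matching each reflected integral to the correct one of $J_{a^{+},c^{+}}$, $J_{a^{+},d^{-}}$, $J_{b^{-},c^{+}}$, $J_{b^{-},d^{-}}$, i.e.\ keeping each substitution's direction straight.
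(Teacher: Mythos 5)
Your proposal is correct and follows essentially the same route as the paper: the midpoint instance of Definition \ref{d4} with $t=k=\tfrac12$ integrated against $t^{\alpha-1}k^{\beta-1}$ for the first inequality, and the corner bound for $F(t,k)$ combined with the reflections $t\mapsto 1-t$, $k\mapsto 1-k$ for the second (the paper writes this as four added corner inequalities integrated against $t^{\alpha-1}k^{\beta-1}$, which is the same computation as your symmetric weight). Your observation that $k^{\alpha-1}$ in \eqref{6} should read $k^{\beta-1}$ is also right.
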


\begin{proof}
According to (\ref{1}) with $x=t_{1}a+\left( 1-t_{1}\right) b,y=\left(
1-t_{1}\right) a+t_{1}b,u=k_{1}c+\left( 1-k_{1}\right) d,w=\left(
1-k_{1}\right) c+k_{1}d$ and $t=k=\frac{1}{2}$, we find that%
\begin{eqnarray}
&&  \label{7} \\
&&f\left( \frac{a+b}{2},\frac{c+d}{2}\right)  \notag \\
&&  \notag \\
&\leq &\left[ h\left( \frac{1}{2}\right) \right] ^{2}  \notag \\
&&  \notag \\
&&\times \left[ f\left( t_{1}a+\left( 1-t_{1}\right) b,k_{1}c+\left(
1-k_{1}\right) d\right) +f\left( t_{1}a+\left( 1-t_{1}\right) b,\left(
1-k_{1}\right) c+k_{1}d\right) \right.  \notag \\
&&  \notag \\
&&\left. +f\left( \left( 1-t_{1}\right) a+t_{1}b,k_{1}c+\left(
1-k_{1}\right) d\right) +f\left( \left( 1-t_{1}\right) a+t_{1}b,\left(
1-k_{1}\right) c+k_{1}d\right) \right] .  \notag
\end{eqnarray}

Thus, multiplying both sides of (\ref{7}) by $t_{1}^{\alpha -1}k_{1}^{\beta
-1},$ then by integrating with respect to $\left( t_{1},k_{1}\right) $ on $%
\left[ 0,1\right] \times \left[ 0,1\right] ,$ we obtain%
\begin{eqnarray*}
&& \\
&&\frac{1}{\alpha \beta }f\left( \frac{a+b}{2},\frac{c+d}{2}\right)  \\
&& \\
&\leq &\left[ h\left( \frac{1}{2}\right) \right] ^{2} \\
&& \\
&&\times \left[ 
\begin{array}{c}
\int_{0}^{1}\int_{0}^{1}t_{1}^{\alpha -1}k_{1}^{\alpha -1} \\ 
\times \left[ f\left( t_{1}a+\left( 1-t_{1}\right) b,k_{1}c+\left(
1-k_{1}\right) d\right) \right.  \\ 
+f\left( t_{1}a+\left( 1-t_{1}\right) b,\left( 1-k_{1}\right)
c+k_{1}d\right)  \\ 
+f\left( \left( 1-t_{1}\right) a+t_{1}b,k_{1}c+\left( 1-k_{1}\right)
d\right)  \\ 
\left. +f\left( \left( 1-t_{1}\right) a+t_{1}b,\left( 1-k_{1}\right)
c+k_{1}d\right) \right] dk_{1}ds_{1}.%
\end{array}%
\right. 
\end{eqnarray*}

Using the change of the varible, we get%
\begin{eqnarray*}
&&f\left( \frac{a+b}{2},\frac{c+d}{2}\right) \\
&& \\
&\leq &\left[ h\left( \frac{1}{2}\right) \right] ^{2}\frac{\alpha \beta }{%
\left( b-a\right) ^{\alpha }\left( d-c\right) ^{\beta }}\left\{
\int_{a}^{b}\int_{c}^{d}\left( b-x\right) ^{\alpha -1}\left( d-y\right)
^{\beta -1}f\left( x,y\right) dydx\right. \\
&& \\
&&+\int_{a}^{b}\int_{c}^{d}\left( b-x\right) ^{\alpha -1}\left( y-c\right)
^{\beta -1}f\left( x,y\right) dydx \\
&& \\
&&+\int_{a}^{b}\int_{c}^{d}\left( x-a\right) ^{\alpha -1}\left( d-y\right)
^{\beta -1}f\left( x,y\right) dydx \\
&& \\
&&\left. +\int_{a}^{b}\int_{c}^{d}\left( x-a\right) ^{\alpha -1}\left(
y-c\right) ^{\beta -1}f\left( x,y\right) dydx\right\}
\end{eqnarray*}%
which the first inequality is proved.

For the proof of second inequality (\ref{6}), we first note that if f is a $h
$-convex function on $\Delta $, then, by using (\ref{1}) with $%
x=a,y=b,u=c,w=d$, it yields%
\begin{eqnarray*}
&&f\left( ta+\left( 1-t\right) b,kc+\left( 1-k\right) d\right)  \\
&\leq &h\left( t\right) h\left( k\right) f\left( a,c\right) +h\left(
t\right) h\left( 1-k\right) f\left( a,d\right)  \\
&&+h\left( 1-t\right) h\left( k\right) f\left( b,c\right) +h\left(
1-t\right) h\left( 1-k\right) f\left( b,d\right) 
\end{eqnarray*}%
\begin{eqnarray*}
&&f\left( \left( 1-t\right) a+tb,kc+\left( 1-k\right) d\right)  \\
&\leq &h\left( 1-t\right) h\left( k\right) f\left( a,c\right) +h\left(
1-t\right) h\left( 1-k\right) f\left( a,d\right)  \\
&&+h\left( t\right) h\left( k\right) f\left( b,c\right) +h\left( t\right)
h\left( 1-k\right) f\left( b,d\right) 
\end{eqnarray*}%
\begin{eqnarray*}
&&f\left( ta+\left( 1-t\right) b,\left( 1-k\right) c+kd\right)  \\
&\leq &h\left( t\right) h\left( 1-k\right) f\left( a,c\right) +h\left(
t\right) h\left( k\right) f\left( a,d\right)  \\
&&+h\left( 1-t\right) h\left( 1-k\right) f\left( b,c\right) +h\left(
1-t\right) h\left( k\right) f\left( b,d\right) 
\end{eqnarray*}%
\begin{eqnarray*}
&&f\left( \left( 1-t\right) a+tb,\left( 1-k\right) c+kd\right)  \\
&\leq &h\left( 1-t\right) h\left( 1-k\right) f\left( a,c\right) +h\left(
1-t\right) h\left( k\right) f\left( a,d\right)  \\
&&+h\left( t\right) h\left( 1-k\right) f\left( b,c\right) +h\left( t\right)
h\left( k\right) f\left( b,d\right) .
\end{eqnarray*}

By adding these inequalities we have%
\begin{eqnarray}
&&  \label{8} \\
&&f\left( ta+\left( 1-t\right) b,kc+\left( 1-k\right) d\right) +f\left(
\left( 1-t\right) a+tb,kc+\left( 1-k\right) d\right)   \notag \\
&&  \notag \\
&&+f\left( ta+\left( 1-t\right) b,\left( 1-k\right) c+kd\right) +f\left(
\left( 1-t\right) a+tb,\left( 1-k\right) c+kd\right)   \notag \\
&&  \notag \\
&\leq &\left[ f\left( a,c\right) +f\left( b,c\right) +f\left( a,d\right)
+f\left( b,d\right) \right]   \notag \\
&&  \notag \\
&&\times \left[ h\left( t\right) h\left( k\right) +h\left( t\right) h\left(
1-k\right) +h\left( 1-t\right) h\left( k\right) +h\left( 1-t\right) h\left(
1-k\right) \right] .  \notag
\end{eqnarray}%
Then, multiplying both sides of (\ref{8}) by $t^{\alpha -1}k^{\beta -1}$ and
integrating with respect to $\left( t,k\right) $ over $\left[ 0,1\right]
\times \left[ 0,1\right] ,$ we get%
\begin{eqnarray*}
&& \\
&&\int_{0}^{1}\int_{0}^{1}t^{\alpha -1}k^{\beta -1} \\
&& \\
&&\times \left[ f\left( ta+\left( 1-t\right) b,kc+\left( 1-k\right) d\right)
+f\left( \left( 1-t\right) a+tb,kc+\left( 1-k\right) d\right) \right.  \\
&& \\
&&\left. +f\left( ta+\left( 1-t\right) b,\left( 1-k\right) c+kd\right)
+f\left( \left( 1-t\right) a+tb,\left( 1-k\right) c+kd\right) \right] dkdt \\
&& \\
&\leq &\left[ f\left( a,c\right) +f\left( b,c\right) +f\left( a,d\right)
+f\left( b,d\right) \right]  \\
&& \\
&&\times \left\{ \int_{0}^{1}\int_{0}^{1}t^{\alpha -1}k^{\beta -1}\right.  \\
&& \\
&&\left. \times \left[ h\left( t\right) h\left( k\right) +h\left( t\right)
h\left( 1-k\right) +h\left( 1-t\right) h\left( k\right) +h\left( 1-t\right)
h\left( 1-k\right) \right] dkdt\right\} .
\end{eqnarray*}%
Here, using the change of the variable we have%
\begin{eqnarray*}
&&\left[ h\left( \frac{1}{2}\right) \right] ^{2}\frac{\Gamma \left( \alpha
+1\right) \Gamma \left( \beta +1\right) }{\left( b-a\right) ^{\alpha }\left(
d-c\right) ^{\beta }} \\
&& \\
&&\times \left[ J_{a^{+},c^{+}}^{\alpha ,\beta }f\left( b,d\right)
+J_{a^{+},d^{-}}^{\alpha ,\beta }f\left( b,c\right) +J_{b^{-},c^{+}}^{\alpha
,\beta }f\left( a,d\right) +J_{b^{-},d^{-}}^{\alpha ,\beta }f\left(
a,c\right) \right]  \\
&& \\
&\leq &\left[ h\left( \frac{1}{2}\right) \right] ^{2}\alpha \beta \left[
f\left( a,c\right) +f\left( a,d\right) +f\left( b,c\right) +f\left(
b,d\right) \right]  \\
&& \\
&&\times \left[ \int_{0}^{1}\int_{0}^{1}t^{\alpha -1}k^{\alpha -1}\right.  \\
&& \\
&&\left. \times \left[ h\left( t\right) h\left( k\right) +h\left( t\right)
h\left( 1-k\right) +h\left( 1-t\right) h\left( k\right) +h\left( 1-t\right)
h\left( 1-k\right) \right] dkdt\right] .
\end{eqnarray*}%
The proof is completed.
\end{proof}

\begin{remark}
\label{r1} If we take $h\left( \alpha \right) =\alpha $ in Theorem \ref{t4},
then the inequality (\ref{6}) becomes the \ inequality (\ref{2}) of Theorem %
\ref{t1}.
\end{remark}

\begin{corollary}
\label{c1} If we take $h\left( \alpha \right) =\alpha ^{s}$ in Theorem \ref%
{t1}, we have the following inequality:%
\begin{eqnarray*}
&&f\left( \frac{a+b}{2},\frac{c+d}{2}\right) \\
&& \\
&\leq &\left( \frac{1}{2}\right) ^{2s}\frac{\Gamma \left( \alpha +1\right)
\Gamma \left( \beta +1\right) }{\left( b-a\right) ^{\alpha }\left(
d-c\right) ^{\beta }} \\
&& \\
&&\times \left[ J_{a^{+},c^{+}}^{\alpha ,\beta }f\left( b,d\right)
+J_{a^{+},d^{-}}^{\alpha ,\beta }f\left( b,c\right) +J_{b^{-},c^{+}}^{\alpha
,\beta }f\left( a,d\right) +J_{b^{-},d^{-}}^{\alpha ,\beta }f\left(
a,c\right) \right] \\
&& \\
&\leq &\left( \frac{1}{2}\right) ^{2s}\alpha \beta \left[ f\left( a,c\right)
+f\left( a,d\right) +f\left( b,c\right) +f\left( b,d\right) \right] \\
&& \\
&&\times \left( \frac{1}{\alpha +s}+B\left( \alpha ,s+1\right) \right)
\left( \frac{1}{\beta +s}+B\left( \beta ,s+1\right) \right)
\end{eqnarray*}%
where B is the Beta function,%
\begin{equation*}
B\left( x,y\right) =\int_{0}^{1}t^{x-1}\left( 1-t\right) ^{y-1}dt.
\end{equation*}
\end{corollary}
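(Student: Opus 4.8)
The plan is to specialize inequality (\ref{6}) of Theorem \ref{t4} directly to the choice $h(\alpha)=\alpha^{s}$ and then evaluate the resulting double integral in closed form. First I would record the elementary observation that
\[
\left[h\left(\tfrac12\right)\right]^{2}=\left[\left(\tfrac12\right)^{s}\right]^{2}=\left(\tfrac12\right)^{2s},
\]
which immediately accounts for the common constant factor standing in front of both the fractional-integral expression and the boundary-value sum in the asserted inequality.

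The substance of the argument is the evaluation of the double integral
\[
\int_{0}^{1}\int_{0}^{1}t^{\alpha-1}k^{\beta-1}\left[h(t)h(k)+h(t)h(1-k)+h(1-t)h(k)+h(1-t)h(1-k)\right]dk\,dt .
\]
With $h(t)=t^{s}$ the key step is to notice that the bracketed sum factors as $\left[t^{s}+(1-t)^{s}\right]\left[k^{s}+(1-k)^{s}\right]$, so the entire integrand separates into a function of $t$ alone times a function of $k$ alone. By Fubini's theorem the double integral therefore equals the product of two one-dimensional integrals,
\[
\left(\int_{0}^{1}t^{\alpha-1}\left[t^{s}+(1-t)^{s}\right]dt\right)\left(\int_{0}^{1}k^{\beta-1}\left[k^{s}+(1-k)^{s}\right]dk\right).
\]

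It then remains to evaluate each factor termwise. For the first factor, the leading term is the elementary integral $\int_{0}^{1}t^{\alpha+s-1}\,dt=\frac{1}{\alpha+s}$, while the second term is precisely the Beta integral $\int_{0}^{1}t^{\alpha-1}(1-t)^{s}\,dt=B(\alpha,s+1)$; hence the $t$-factor equals $\frac{1}{\alpha+s}+B(\alpha,s+1)$, and by the identical computation the $k$-factor equals $\frac{1}{\beta+s}+B(\beta,s+1)$. Multiplying these together and combining with the constant $\left(\tfrac12\right)^{2s}$ reproduces exactly the stated inequality. I do not expect any genuine obstacle in this corollary, since it is a pure specialization of Theorem \ref{t4}; the only point requiring a moment of attention is the separation of variables that collapses the double integral into a product, after which the argument reduces to two routine Beta-function evaluations.
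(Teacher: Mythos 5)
Your proof is correct and is exactly the computation the paper leaves implicit: the corollary is a direct specialization of Theorem \ref{t4} (the reference to Theorem \ref{t1} in the statement is a typo), with $[h(\tfrac12)]^{2}=(\tfrac12)^{2s}$ and the double integral factoring via $[t^{s}+(1-t)^{s}][k^{s}+(1-k)^{s}]$ into the two Beta-type factors. Nothing is missing.
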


\begin{theorem}
\label{t5} Let $f:\Delta \subset 
\mathbb{R}
^{2}\rightarrow 
\mathbb{R}
$ be a partial differentiable mapping on $\Delta :=\left[ a,b\right] \times %
\left[ c,d\right] $ in $%
\mathbb{R}
^{2}$ with $0\leq a<b$, $0\leq c<d$. If $\left\vert \frac{\partial ^{2}f}{%
\partial t\partial k}\right\vert $ is a h-convex function on the
co-ordinates on $\Delta $, then one has the inequalities:%
\begin{eqnarray}
&&  \label{9} \\
&&\left\vert \frac{f\left( a,c\right) +f\left( a,d\right) +f\left(
b,c\right) +f\left( b,d\right) }{4}\right.  \notag \\
&&  \notag \\
&&\left. +\left\{ \frac{\Gamma \left( \alpha +1\right) \Gamma \left( \beta
+1\right) }{4\left( b-a\right) ^{\alpha }\left( d-c\right) ^{\beta }}\right.
\right.  \notag \\
&&  \notag \\
&&\left. \left. \times \left[ J_{a^{+},c^{+}}^{\alpha ,\beta }f\left(
b,d\right) +J_{a^{+},d^{-}}^{\alpha ,\beta }f\left( b,c\right)
+J_{b^{-},c^{+}}^{\alpha ,\beta }f\left( a,d\right) +J_{b^{-},d^{-}}^{\alpha
,\beta }f\left( a,c\right) \right] \right\} -A\right\vert  \notag \\
&&  \notag \\
&\leq &\frac{\left( b-a\right) \left( d-c\right) }{4}  \notag \\
&&  \notag \\
&&\times \left\{ \left\vert \frac{\partial ^{2}f}{\partial t\partial k}%
\left( a,c\right) \right\vert \int_{0}^{1}\int_{0}^{1}\left( t^{\alpha
}+\left( 1-t\right) ^{\alpha }\right) \left( k^{\beta }+\left( 1-k\right)
^{\beta }\right) h\left( t\right) h\left( k\right) dkdt\right.  \notag \\
&&  \notag \\
&&+\left\vert \frac{\partial ^{2}f}{\partial t\partial k}\left( b,c\right)
\right\vert \int_{0}^{1}\int_{0}^{1}\left( t^{\alpha }+\left( 1-t\right)
^{\alpha }\right) \left( k^{\beta }+\left( 1-k\right) ^{\beta }\right)
h\left( 1-t\right) h\left( k\right) dkdt  \notag \\
&&  \notag \\
&&+\left\vert \frac{\partial ^{2}f}{\partial t\partial k}\left( a,d\right)
\right\vert \int_{0}^{1}\int_{0}^{1}\left( t^{\alpha }+\left( 1-t\right)
^{\alpha }\right) \left( k^{\beta }+\left( 1-k\right) ^{\beta }\right)
h\left( t\right) h\left( 1-k\right) dkdt  \notag \\
&&  \notag \\
&&\left. +\left\vert \frac{\partial ^{2}f}{\partial t\partial k}\left(
b,d\right) \right\vert \int_{0}^{1}\int_{0}^{1}\left( t^{\alpha }+\left(
1-t\right) ^{\alpha }\right) \left( k^{\beta }+\left( 1-k\right) ^{\beta
}\right) h\left( 1-t\right) h\left( 1-k\right) dkdt\right\}  \notag
\end{eqnarray}%
where%
\begin{eqnarray*}
A &=&\frac{\Gamma \left( \beta +1\right) }{4\left( d-c\right) ^{\beta }}%
\left[ J_{c^{+}}^{\beta }f\left( a,d\right) +J_{c^{+}}^{\beta }f\left(
b,d\right) +J_{d^{-}}^{\beta }f\left( a,c\right) +J_{d^{-}}^{\beta }f\left(
b,c\right) \right] \\
&& \\
&&+\frac{\Gamma \left( \alpha +1\right) }{4\left( b-a\right) ^{\alpha }}%
\left[ J_{a^{+}}^{\alpha }f\left( b,c\right) +J_{a^{+}}^{\alpha }f\left(
b,d\right) +J_{b^{-}}^{\alpha }f\left( a,c\right) +J_{b^{-}}^{\alpha
}f\left( a,d\right) \right] .
\end{eqnarray*}
\end{theorem}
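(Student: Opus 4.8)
The plan is to proceed exactly as in the proof of Theorem~\ref{t2}, starting from the identity of Lemma~\ref{l1}, passing to absolute values, and then invoking the h-convexity of $\left\vert \frac{\partial ^{2}f}{\partial t\partial k}\right\vert $ on the co-ordinates in place of ordinary co-ordinated convexity. First I would rewrite the quantity inside the modulus on the left-hand side of (\ref{9}) by means of the equality (\ref{5}), so that it equals $\frac{\left( b-a\right) \left( d-c\right) }{4}$ times the signed sum of the four double integrals of $\frac{\partial ^{2}f}{\partial t\partial k}\left( ta+\left( 1-t\right) b,kc+\left( 1-k\right) d\right) $ weighted by $t^{\alpha }k^{\beta }$, $\left( 1-t\right) ^{\alpha }k^{\beta }$, $t^{\alpha }\left( 1-k\right) ^{\beta }$ and $\left( 1-t\right) ^{\alpha }\left( 1-k\right) ^{\beta }$.

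Applying the triangle inequality and then pulling the absolute value inside each integral, which is legitimate because all four weights are non-negative on $\left[ 0,1\right] ^{2}$, collapses the signed sum into a single integral with the combined weight, since
\[
t^{\alpha }k^{\beta }+\left( 1-t\right) ^{\alpha }k^{\beta }+t^{\alpha }\left( 1-k\right) ^{\beta }+\left( 1-t\right) ^{\alpha }\left( 1-k\right) ^{\beta }=\left( t^{\alpha }+\left( 1-t\right) ^{\alpha }\right) \left( k^{\beta }+\left( 1-k\right) ^{\beta }\right) .
\]
Hence the right-hand side of (\ref{5}) is bounded in modulus by $\frac{\left( b-a\right) \left( d-c\right) }{4}$ times the integral over $\left[ 0,1\right] ^{2}$ of $\left( t^{\alpha }+\left( 1-t\right) ^{\alpha }\right) \left( k^{\beta }+\left( 1-k\right) ^{\beta }\right) $ against $\left\vert \frac{\partial ^{2}f}{\partial t\partial k}\left( ta+\left( 1-t\right) b,kc+\left( 1-k\right) d\right) \right\vert $.

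The key step is to estimate this last integrand by the h-convexity inequality (\ref{1}) applied to $\left\vert \frac{\partial ^{2}f}{\partial t\partial k}\right\vert $ with $x=a$, $y=b$, $u=c$, $w=d$, which yields the pointwise bound by $h\left( t\right) h\left( k\right) \left\vert \frac{\partial ^{2}f}{\partial t\partial k}\left( a,c\right) \right\vert +h\left( 1-t\right) h\left( k\right) \left\vert \frac{\partial ^{2}f}{\partial t\partial k}\left( b,c\right) \right\vert +h\left( t\right) h\left( 1-k\right) \left\vert \frac{\partial ^{2}f}{\partial t\partial k}\left( a,d\right) \right\vert +h\left( 1-t\right) h\left( 1-k\right) \left\vert \frac{\partial ^{2}f}{\partial t\partial k}\left( b,d\right) \right\vert $. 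Substituting this and distributing the integration over the four summands lets each corner value $\left\vert \frac{\partial ^{2}f}{\partial t\partial k}\left( a,c\right) \right\vert $, and so on, factor out, leaving it multiplied by the double integral of $\left( t^{\alpha }+\left( 1-t\right) ^{\alpha }\right) \left( k^{\beta }+\left( 1-k\right) ^{\beta }\right) $ against the matching product of $h$-weights. This is precisely the four-term right-hand side of (\ref{9}).

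I expect the only real difficulty to be bookkeeping: one must pair each product of $h$-weights with the correct corner value, and confirm that the signs in (\ref{5}) disappear cleanly under the triangle inequality once the non-negative weights are combined. No new change of variables is needed, since the passage from the $\left[ 0,1\right] ^{2}$ integrals to the Riemann--Liouville operators $J^{\alpha ,\beta }$ and to the single-variable terms assembled in $A$ is already contained in Lemma~\ref{l1}.
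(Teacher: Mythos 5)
Your proposal follows essentially the same route as the paper's own proof: start from the identity of Lemma \ref{l1}, pass to absolute values via the triangle inequality so that the four weights combine into $\left( t^{\alpha }+\left( 1-t\right) ^{\alpha }\right) \left( k^{\beta }+\left( 1-k\right) ^{\beta }\right)$, and then apply the co-ordinated $h$-convexity of $\left\vert \frac{\partial ^{2}f}{\partial t\partial k}\right\vert$ with $x=a$, $y=b$, $u=c$, $w=d$ before integrating term by term. Your pairing of each corner value with its product of $h$-weights agrees with (\ref{9}), so the argument is correct and matches the paper.
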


\begin{proof}
From Lemma \ref{l1}, we have%
\begin{eqnarray*}
&&\left\vert \frac{f\left( a,c\right) +f\left( a,d\right) +f\left(
b,c\right) +f\left( b,d\right) }{4}\right.  \\
&& \\
&&+\left\{ \frac{\Gamma \left( \alpha +1\right) \Gamma \left( \beta
+1\right) }{4\left( b-a\right) ^{\alpha }\left( d-c\right) ^{\beta }}\right. 
\\
&& \\
&&\left. \left. \times \left[ J_{a^{+},c^{+}}^{\alpha ,\beta }f\left(
b,d\right) +J_{a^{+},d^{-}}^{\alpha ,\beta }f\left( b,c\right)
+J_{b^{-},c^{+}}^{\alpha ,\beta }f\left( a,d\right) +J_{b^{-},d^{-}}^{\alpha
,\beta }f\left( a,c\right) \right] \right\} -A\right\vert  \\
&& \\
&\leq &\frac{\left( b-a\right) \left( d-c\right) }{4}\left\{
\int_{0}^{1}\int_{0}^{1}t^{\alpha }k^{\beta }\left\vert \frac{\partial ^{2}f%
}{\partial t\partial k}\left( ta+\left( 1-t\right) b,kc+\left( 1-k\right)
d\right) \right\vert dkdt\right.  \\
&& \\
&&+\int_{0}^{1}\int_{0}^{1}\left( 1-t\right) ^{\alpha }k^{\beta }\left\vert 
\frac{\partial ^{2}f}{\partial t\partial k}\left( ta+\left( 1-t\right)
b,kc+\left( 1-k\right) d\right) \right\vert dkdt \\
&& \\
&&+\int_{0}^{1}\int_{0}^{1}t^{\alpha }\left( 1-k\right) ^{\beta }\left\vert 
\frac{\partial ^{2}f}{\partial t\partial k}\left( ta+\left( 1-t\right)
b,kc+\left( 1-k\right) d\right) \right\vert dkdt \\
&& \\
&&\left. -\int_{0}^{1}\int_{0}^{1}\left( 1-t\right) ^{\alpha }\left(
1-k\right) ^{\beta }\left\vert \frac{\partial ^{2}f}{\partial t\partial k}%
\left( ta+\left( 1-t\right) b,kc+\left( 1-k\right) d\right) \right\vert
dkdt\right\} .
\end{eqnarray*}%
Since $\left\vert \frac{\partial ^{2}f}{\partial t\partial k}\right\vert $
is h-convex function on the co-ordinates on $\Delta ,$ then one has:%
\begin{eqnarray*}
&&\left\vert \frac{f\left( a,c\right) +f\left( a,d\right) +f\left(
b,c\right) +f\left( b,d\right) }{4}\right.  \\
&& \\
&&+\left\{ \frac{\Gamma \left( \alpha +1\right) \Gamma \left( \beta
+1\right) }{4\left( b-a\right) ^{\alpha }\left( d-c\right) ^{\beta }}\right. 
\\
&& \\
&&\left. \left. \times \left[ J_{a^{+},c^{+}}^{\alpha ,\beta }f\left(
b,d\right) +J_{a^{+},d^{-}}^{\alpha ,\beta }f\left( b,c\right)
+J_{b^{-},c^{+}}^{\alpha ,\beta }f\left( a,d\right) +J_{b^{-},d^{-}}^{\alpha
,\beta }f\left( a,c\right) \right] \right\} -A\right\vert 
\end{eqnarray*}%
\begin{eqnarray*}
&\leq &\frac{\left( b-a\right) \left( d-c\right) }{4} \\
&& \\
&&\times \left\{ \left\vert \frac{\partial ^{2}f}{\partial t\partial k}%
\left( a,c\right) \right\vert \int_{0}^{1}\int_{0}^{1}\left( t^{\alpha
}+\left( 1-t\right) ^{\alpha }\right) \left( k^{\beta }+\left( 1-k\right)
^{\beta }\right) h\left( t\right) h\left( k\right) dkdt\right.  \\
&& \\
&&+\left\vert \frac{\partial ^{2}f}{\partial t\partial k}\left( b,c\right)
\right\vert \int_{0}^{1}\int_{0}^{1}\left( t^{\alpha }+\left( 1-t\right)
^{\alpha }\right) \left( k^{\beta }+\left( 1-k\right) ^{\beta }\right)
h\left( 1-t\right) h\left( k\right) dkdt \\
&& \\
&&+\left\vert \frac{\partial ^{2}f}{\partial t\partial k}\left( a,d\right)
\right\vert \int_{0}^{1}\int_{0}^{1}\left( t^{\alpha }+\left( 1-t\right)
^{\alpha }\right) \left( k^{\beta }+\left( 1-k\right) ^{\beta }\right)
h\left( t\right) h\left( 1-k\right) dkdt \\
&& \\
&&\left. +\left\vert \frac{\partial ^{2}f}{\partial t\partial k}\left(
b,d\right) \right\vert \int_{0}^{1}\int_{0}^{1}\left( t^{\alpha }+\left(
1-t\right) ^{\alpha }\right) \left( k^{\beta }+\left( 1-k\right) ^{\beta
}\right) h\left( 1-t\right) h\left( 1-k\right) dkdt\right\} .
\end{eqnarray*}

The proof is completed.
\end{proof}

\begin{remark}
\label{r2} If we take $h\left( \alpha \right) =\alpha $ in Theorem \ref{t5},
then the inequality (\ref{9}) becomes the \ inequality (\ref{3}) of Theorem %
\ref{t2}.
\end{remark}

\begin{corollary}
\label{c2} If we take $h\left( \alpha \right) =\alpha ^{s}$ in Theorem \ref%
{t5}, we have%
\begin{eqnarray*}
&&\left\vert \frac{f\left( a,c\right) +f\left( a,d\right) +f\left(
b,c\right) +f\left( b,d\right) }{4}\right. \\
&& \\
&&+\left\{ \frac{\Gamma \left( \alpha +1\right) \Gamma \left( \beta
+1\right) }{4\left( b-a\right) ^{\alpha }\left( d-c\right) ^{\beta }}\right.
\\
&& \\
&&\left. \left. \left[ J_{a^{+},c^{+}}^{\alpha ,\beta }f\left( b,d\right)
+J_{a^{+},d^{-}}^{\alpha ,\beta }f\left( b,c\right) +J_{b^{-},c^{+}}^{\alpha
,\beta }f\left( a,d\right) +J_{b^{-},d^{-}}^{\alpha ,\beta }f\left(
a,c\right) \right] \right\} -A\right\vert \\
&& \\
&\leq &\frac{\left( b-a\right) \left( d-c\right) }{4} \\
&& \\
&&\times \left[ \left\vert \frac{\partial ^{2}f}{\partial t\partial k}\left(
a,c\right) \right\vert +\left\vert \frac{\partial ^{2}f}{\partial t\partial k%
}\left( b,c\right) \right\vert +\left\vert \frac{\partial ^{2}f}{\partial
t\partial k}\left( a,d\right) \right\vert +\left\vert \frac{\partial ^{2}f}{%
\partial t\partial k}\left( b,d\right) \right\vert \right] \\
&& \\
&&\times \left( \frac{1}{\alpha +s+1}+B\left( s+1,\alpha +1\right) \right)
\left( \frac{1}{\beta +s+1}+B\left( s+1,\beta +1\right) \right)
\end{eqnarray*}%
where%
\begin{eqnarray*}
A &=&\frac{\Gamma \left( \beta +1\right) }{4\left( d-c\right) ^{\beta }}%
\left[ J_{c^{+}}^{\beta }f\left( a,d\right) +J_{c^{+}}^{\beta }f\left(
b,d\right) +J_{d^{-}}^{\beta }f\left( a,c\right) +J_{d^{-}}^{\beta }f\left(
b,c\right) \right] \\
&& \\
&&+\frac{\Gamma \left( \alpha +1\right) }{4\left( b-a\right) ^{\alpha }}%
\left[ J_{a^{+}}^{\alpha }f\left( b,c\right) +J_{a^{+}}^{\alpha }f\left(
b,d\right) +J_{b^{-}}^{\alpha }f\left( a,c\right) +J_{b^{-}}^{\alpha
}f\left( a,d\right) \right]
\end{eqnarray*}%
and B is the Beta function,%
\begin{equation*}
B\left( x,y\right) =\int_{0}^{1}t^{x-1}\left( 1-t\right) ^{y-1}dt.
\end{equation*}
\end{corollary}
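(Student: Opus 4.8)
The plan is to specialize Theorem \ref{t5} to the weight $h(\alpha)=\alpha^{s}$ and then evaluate the four double integrals that appear on the right-hand side of (\ref{9}). Substituting $h(t)=t^{s}$, $h(1-t)=(1-t)^{s}$ and similarly in $k$, I first observe that in each of the four terms the integrand splits into a product of a function of $t$ and a function of $k$. Hence every double integral factors as
\[
\int_{0}^{1}\left(t^{\alpha}+(1-t)^{\alpha}\right)\phi(t)\,dt\;\cdot\;\int_{0}^{1}\left(k^{\beta}+(1-k)^{\beta}\right)\psi(k)\,dk,
\]
where $\phi$ is $t^{s}$ or $(1-t)^{s}$ and $\psi$ is $k^{s}$ or $(1-k)^{s}$, according to which of $h(t)h(k)$, $h(1-t)h(k)$, $h(t)h(1-k)$, $h(1-t)h(1-k)$ is being integrated.

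Next I would evaluate the single $t$-integral in each case using the definition of the Beta function. For $\phi(t)=t^{s}$ one gets $\int_{0}^{1}t^{\alpha+s}\,dt+\int_{0}^{1}(1-t)^{\alpha}t^{s}\,dt=\frac{1}{\alpha+s+1}+B(s+1,\alpha+1)$, while for $\phi(t)=(1-t)^{s}$ one gets $\int_{0}^{1}t^{\alpha}(1-t)^{s}\,dt+\int_{0}^{1}(1-t)^{\alpha+s}\,dt=B(\alpha+1,s+1)+\frac{1}{\alpha+s+1}$. The \emph{crucial observation} is that the symmetry $B(\alpha+1,s+1)=B(s+1,\alpha+1)$ of the Beta function forces these two expressions to coincide, so the $t$-integral equals $\frac{1}{\alpha+s+1}+B(s+1,\alpha+1)$ irrespective of whether $t^{s}$ or $(1-t)^{s}$ occurs. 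The identical computation in the $k$-variable yields $\frac{1}{\beta+s+1}+B(s+1,\beta+1)$ in every term.

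Finally, since all four double integrals thereby share the common value
\[
\left(\frac{1}{\alpha+s+1}+B(s+1,\alpha+1)\right)\left(\frac{1}{\beta+s+1}+B(s+1,\beta+1)\right),
\]
I would factor this quantity out of the braces in the bound of Theorem \ref{t5}; what remains inside is exactly the sum of the four derivative moduli, which produces the asserted inequality. The only step requiring genuine attention is the Beta-function symmetry used above: it is precisely what collapses the four a priori distinct weights into a single shared factor, and recognizing this is the main (and rather mild) obstacle in the argument. The stated value of $A$ carries over unchanged from Theorem \ref{t5}, since it is independent of $h$.
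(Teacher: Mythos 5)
Your proposal is correct and is exactly the intended route: the paper states Corollary \ref{c2} without a separate proof, as an immediate specialization of Theorem \ref{t5} with $h(\alpha)=\alpha^{s}$, and your computation—factoring each double integral into single integrals, evaluating them as $\frac{1}{\alpha+s+1}+B(s+1,\alpha+1)$ and $\frac{1}{\beta+s+1}+B(s+1,\beta+1)$, and invoking the symmetry $B(x,y)=B(y,x)$ to see that all four weights collapse to a single common factor—is precisely the calculation the paper leaves implicit. Nothing is missing; the observation about Beta-function symmetry is the right (and only) nontrivial step.
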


\begin{theorem}
\label{t6} Let $f:\Delta \subset 
\mathbb{R}
^{2}\rightarrow 
\mathbb{R}
$ be a partial differentiable mapping on $\Delta :=\left[ a,b\right] \times %
\left[ c,d\right] $ in $%
\mathbb{R}
^{2}$ with $0\leq a<b$, $0\leq c<d$. If $\left\vert \frac{\partial ^{2}f}{%
\partial t\partial k}\right\vert ^{q},q>1,$ is a h-convex function on the
co-ordinates on $\Delta $, then one has the inequalities:%
\begin{eqnarray}
&&  \label{10} \\
&&\left\vert \frac{f\left( a,c\right) +f\left( a,d\right) +f\left(
b,c\right) +f\left( b,d\right) }{4}\right.   \notag \\
&&  \notag \\
&&+\left\{ \frac{\Gamma \left( \alpha +1\right) \Gamma \left( \beta
+1\right) }{4\left( b-a\right) ^{\alpha }\left( d-c\right) ^{\beta }}\right. 
\notag \\
&&  \notag \\
&&\left. \left. \times \left[ J_{a^{+},c^{+}}^{\alpha ,\beta }f\left(
b,d\right) +J_{a^{+},d^{-}}^{\alpha ,\beta }f\left( b,c\right)
+J_{b^{-},c^{+}}^{\alpha ,\beta }f\left( a,d\right) +J_{b^{-},d^{-}}^{\alpha
,\beta }f\left( a,c\right) \right] \right\} -A\right\vert   \notag \\
&&  \notag \\
&\leq &\frac{\left( b-a\right) \left( d-c\right) }{\left[ \left( \alpha
p+1\right) \left( \beta p+1\right) \right] ^{\frac{1}{p}}}  \notag \\
&&  \notag \\
&&\times \left( \left\vert \frac{\partial ^{2}f}{\partial k\partial t}\left(
a,c\right) \right\vert ^{q}\int_{0}^{1}\int_{0}^{1}h\left( t\right) h\left(
k\right) dkdt\right.   \notag \\
&&  \notag \\
&&+\left\vert \frac{\partial ^{2}f}{\partial k\partial t}\left( a,d\right)
\right\vert ^{q}\int_{0}^{1}\int_{0}^{1}h\left( t\right) h\left( 1-k\right)
dkdt  \notag \\
&&  \notag \\
&&+\left\vert \frac{\partial ^{2}f}{\partial k\partial t}\left( b,c\right)
\right\vert ^{q}\int_{0}^{1}\int_{0}^{1}h\left( 1-t\right) h\left( k\right)
dkdt  \notag \\
&&  \notag \\
&&\left. +\left\vert \frac{\partial ^{2}f}{\partial k\partial t}\left(
b,d\right) \right\vert ^{q}\int_{0}^{1}\int_{0}^{1}h\left( 1-t\right)
h\left( 1-k\right) dkdt\right) ^{\frac{1}{q}}  \notag
\end{eqnarray}%
where%
\begin{eqnarray*}
A &=&\frac{\Gamma \left( \beta +1\right) }{4\left( d-c\right) ^{\beta }}%
\left[ J_{c^{+}}^{\beta }f\left( a,d\right) +J_{c^{+}}^{\beta }f\left(
b,d\right) +J_{d^{-}}^{\beta }f\left( a,c\right) +J_{d^{-}}^{\beta }f\left(
b,c\right) \right]  \\
&& \\
&&+\frac{\Gamma \left( \alpha +1\right) }{4\left( b-a\right) ^{\alpha }}%
\left[ J_{a^{+}}^{\alpha }f\left( b,c\right) +J_{a^{+}}^{\alpha }f\left(
b,d\right) +J_{b^{-}}^{\alpha }f\left( a,c\right) +J_{b^{-}}^{\alpha
}f\left( a,d\right) \right] 
\end{eqnarray*}%
and $\frac{1}{p}+\frac{1}{q}=1.$
\end{theorem}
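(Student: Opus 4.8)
The plan is to open exactly as in the proof of Theorem \ref{t5}, starting from the identity in Lemma \ref{l1}, but to estimate the four resulting double integrals by H\"{o}lder's inequality rather than directly, since now it is $\left\vert \frac{\partial ^{2}f}{\partial t\partial k}\right\vert ^{q}$ (and not the derivative itself) that is assumed h-convex on the co-ordinates.

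First I would take the modulus of both sides of the identity in Lemma \ref{l1} and use the triangle inequality, so that the left-hand side is bounded by $\frac{(b-a)(d-c)}{4}$ times the sum of the four integrals $\int_{0}^{1}\int_{0}^{1}w_{i}(t,k)\left\vert \frac{\partial ^{2}f}{\partial t\partial k}\left( ta+(1-t)b,kc+(1-k)d\right) \right\vert dk\,dt$, with weights $w_{1}=t^{\alpha }k^{\beta }$, $w_{2}=(1-t)^{\alpha }k^{\beta }$, $w_{3}=t^{\alpha }(1-k)^{\beta }$ and $w_{4}=(1-t)^{\alpha }(1-k)^{\beta }$. The decisive structural feature here is that the same mixed partial, evaluated at the same point $\left( ta+(1-t)b,kc+(1-k)d\right) $, occurs in all four integrals.

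Next I would apply H\"{o}lder's inequality with exponents $p$ and $q$ to each integral. For the $i$-th term this splits off the power factor $\left( \int_{0}^{1}\int_{0}^{1}w_{i}^{p}\,dk\,dt\right) ^{1/p}$ times the factor $\left( \int_{0}^{1}\int_{0}^{1}\left\vert \frac{\partial ^{2}f}{\partial t\partial k}\left( ta+(1-t)b,kc+(1-k)d\right) \right\vert ^{q}dk\,dt\right) ^{1/q}$, which is the same for all four terms. The key computation is that, via the reflection substitutions $t\mapsto 1-t$ and $k\mapsto 1-k$, every weight integral reduces to $\int_{0}^{1}t^{\alpha p}dt\int_{0}^{1}k^{\beta p}dk=\frac{1}{(\alpha p+1)(\beta p+1)}$, so the four power factors are identical and equal $\frac{1}{\left[ (\alpha p+1)(\beta p+1)\right] ^{1/p}}$. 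Summing then contributes a factor $4$ which cancels the $\frac{1}{4}$, leaving precisely $\frac{(b-a)(d-c)}{\left[ (\alpha p+1)(\beta p+1)\right] ^{1/p}}$ in front of the single common $q$-norm factor.

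Finally I would invoke Definition \ref{d4} for the h-convex function $\left\vert \frac{\partial ^{2}f}{\partial t\partial k}\right\vert ^{q}$ with $x=a$, $y=b$, $u=c$, $w=d$, which bounds the integrand of that $q$-norm factor by the four corner values $\left\vert \frac{\partial ^{2}f}{\partial k\partial t}\right\vert ^{q}$ at $(a,c),(a,d),(b,c),(b,d)$ weighted respectively by $h(t)h(k)$, $h(t)h(1-k)$, $h(1-t)h(k)$ and $h(1-t)h(1-k)$. Integrating term by term reproduces exactly the four $\int_{0}^{1}\int_{0}^{1}h(\cdot )h(\cdot )\,dk\,dt$ integrals appearing inside the $\frac{1}{q}$-power in (\ref{10}), and the proof is complete. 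I expect the only delicate point to be the bookkeeping that forces the factor of $4$ to cancel; this rests on the two observations, which I would state explicitly before collecting terms, that the mixed partial is common to all four integrals and that the four H\"{o}lder weight integrals coincide by symmetry.
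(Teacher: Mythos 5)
Your proposal is correct and follows essentially the same route as the paper's own proof: bound via Lemma \ref{l1} and the triangle inequality, apply H\"{o}lder's inequality with exponents $p,q$ to split off the common factor $\left( \int_{0}^{1}\int_{0}^{1}\left\vert \frac{\partial ^{2}f}{\partial t\partial k}\right\vert ^{q}dkdt\right) ^{1/q}$, observe that the four weight integrals each equal $\left[ (\alpha p+1)(\beta p+1)\right] ^{-1/p}$ so the factor $4$ cancels the $\frac{1}{4}$, and finish by the co-ordinated $h$-convexity of $\left\vert \frac{\partial ^{2}f}{\partial t\partial k}\right\vert ^{q}$. No gaps; your explicit remarks on the symmetry of the weight integrals and the common $q$-norm factor are exactly the bookkeeping the paper relies on.
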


\begin{proof}
From Lemma \ref{l1}, we have%
\begin{eqnarray*}
&&\left\vert \frac{f\left( a,c\right) +f\left( a,d\right) +f\left(
b,c\right) +f\left( b,d\right) }{4}\right.  \\
&& \\
&&+\left\{ \frac{\Gamma \left( \alpha +1\right) \Gamma \left( \beta
+1\right) }{4\left( b-a\right) ^{\alpha }\left( d-c\right) ^{\beta }}\right. 
\\
&& \\
&&\left. \left. \times \left[ J_{a^{+},c^{+}}^{\alpha ,\beta }f\left(
b,d\right) +J_{a^{+},d^{-}}^{\alpha ,\beta }f\left( b,c\right)
+J_{b^{-},c^{+}}^{\alpha ,\beta }f\left( a,d\right) +J_{b^{-},d^{-}}^{\alpha
,\beta }f\left( a,c\right) \right] \right\} -A\right\vert 
\end{eqnarray*}%
\begin{eqnarray*}
&\leq &\frac{\left( b-a\right) \left( d-c\right) }{4}\left\{
\int_{0}^{1}\int_{0}^{1}t^{\alpha }k^{\beta }\left\vert \frac{\partial ^{2}f%
}{\partial t\partial k}\left( ta+\left( 1-t\right) b,kc+\left( 1-k\right)
d\right) \right\vert dkdt\right.  \\
&& \\
&&+\int_{0}^{1}\int_{0}^{1}\left( 1-t\right) ^{\alpha }k^{\beta }\left\vert 
\frac{\partial ^{2}f}{\partial t\partial k}\left( ta+\left( 1-t\right)
b,kc+\left( 1-k\right) d\right) \right\vert dkdt \\
&& \\
&&+\int_{0}^{1}\int_{0}^{1}t^{\alpha }\left( 1-k\right) ^{\beta }\left\vert 
\frac{\partial ^{2}f}{\partial t\partial k}\left( ta+\left( 1-t\right)
b,kc+\left( 1-k\right) d\right) \right\vert dkdt \\
&& \\
&&\left. -\int_{0}^{1}\int_{0}^{1}\left( 1-t\right) ^{\alpha }\left(
1-k\right) ^{\beta }\left\vert \frac{\partial ^{2}f}{\partial t\partial k}%
\left( ta+\left( 1-t\right) b,kc+\left( 1-k\right) d\right) \right\vert
dkdt\right\} .
\end{eqnarray*}%
By using the well known H\"{o}lder's inequality for double integrals, we get%
\begin{eqnarray*}
&&\left\vert \frac{f\left( a,c\right) +f\left( a,d\right) +f\left(
b,c\right) +f\left( b,d\right) }{4}\right.  \\
&& \\
&&+\left\{ \frac{\Gamma \left( \alpha +1\right) \Gamma \left( \beta
+1\right) }{4\left( b-a\right) ^{\alpha }\left( d-c\right) ^{\beta }}\right. 
\\
&& \\
&&\left. \left. \times \left[ J_{a^{+},c^{+}}^{\alpha ,\beta }f\left(
b,d\right) +J_{a^{+},d^{-}}^{\alpha ,\beta }f\left( b,c\right)
+J_{b^{-},c^{+}}^{\alpha ,\beta }f\left( a,d\right) +J_{b^{-},d^{-}}^{\alpha
,\beta }f\left( a,c\right) \right] \right\} -A\right\vert  \\
&& \\
&\leq &\frac{\left( b-a\right) \left( d-c\right) }{4}\left\{ \left(
\int_{0}^{1}\int_{0}^{1}t^{p\alpha }k^{p\beta }dkdt\right) ^{\frac{1}{p}%
}+\left( \int_{0}^{1}\int_{0}^{1}\left( 1-t\right) ^{p\alpha }k^{p\beta
}dkdt\right) ^{\frac{1}{p}}\right.  \\
&& \\
&&\left. \left( \int_{0}^{1}\int_{0}^{1}t^{p\alpha }\left( 1-k\right)
^{p\beta }dkdt\right) ^{\frac{1}{p}}+\left( \int_{0}^{1}\int_{0}^{1}\left(
1-t\right) ^{p\alpha }\left( 1-k\right) ^{p\beta }dkdt\right) ^{\frac{1}{p}%
}\right\}  \\
&& \\
&&\left( \int_{0}^{1}\int_{0}^{1}\left\vert \frac{\partial ^{2}f}{\partial
t\partial k}\left( ta+\left( 1-t\right) b,kc+\left( 1-k\right) d\right)
\right\vert ^{q}dkdt\right) ^{^{\frac{1}{q}}}
\end{eqnarray*}%
Since $\left\vert \frac{\partial ^{2}f}{\partial t\partial k}\right\vert ^{q}
$ is h-convex function on the co-ordinates on $\Delta ,$ then one has:%
\begin{eqnarray*}
&&\left\vert \frac{f\left( a,c\right) +f\left( a,d\right) +f\left(
b,c\right) +f\left( b,d\right) }{4}\right.  \\
&& \\
&&+\left\{ \frac{\Gamma \left( \alpha +1\right) \Gamma \left( \beta
+1\right) }{4\left( b-a\right) ^{\alpha }\left( d-c\right) ^{\beta }}\right. 
\\
&& \\
&&\left. \left. \times \left[ J_{a^{+},c^{+}}^{\alpha ,\beta }f\left(
b,d\right) +J_{a^{+},d^{-}}^{\alpha ,\beta }f\left( b,c\right)
+J_{b^{-},c^{+}}^{\alpha ,\beta }f\left( a,d\right) +J_{b^{-},d^{-}}^{\alpha
,\beta }f\left( a,c\right) \right] \right\} -A\right\vert 
\end{eqnarray*}%
\begin{eqnarray*}
&\leq &\frac{\left( b-a\right) \left( d-c\right) }{\left[ \left( \alpha
p+1\right) \left( \beta p+1\right) \right] ^{\frac{1}{p}}} \\
&& \\
&&\times \left( \left\vert \frac{\partial ^{2}f}{\partial k\partial t}\left(
a,c\right) \right\vert ^{q}\int_{0}^{1}\int_{0}^{1}h\left( t\right) h\left(
k\right) dkdt\right.  \\
&& \\
&&+\left\vert \frac{\partial ^{2}f}{\partial k\partial t}\left( a,d\right)
\right\vert ^{q}\int_{0}^{1}\int_{0}^{1}h\left( t\right) h\left( 1-k\right)
dkdt \\
&& \\
&&+\left\vert \frac{\partial ^{2}f}{\partial k\partial t}\left( b,c\right)
\right\vert ^{q}\int_{0}^{1}\int_{0}^{1}h\left( 1-t\right) h\left( k\right)
dkdt \\
&& \\
&&\left. +\left\vert \frac{\partial ^{2}f}{\partial k\partial t}\left(
b,d\right) \right\vert ^{q}\int_{0}^{1}\int_{0}^{1}h\left( 1-t\right)
h\left( 1-k\right) dkdt\right) ^{\frac{1}{q}}
\end{eqnarray*}

and the proof is completed.
\end{proof}

\begin{remark}
\label{r3} If we take $h\left( \alpha \right) =\alpha $ in Theorem \ref{t6},
then the inequality (\ref{10}) becomes the \ inequality (\ref{4}) of Theorem %
\ref{t3}.
\end{remark}

\begin{corollary}
\label{c3} If we take $h\left( \alpha \right) =\alpha ^{s}$ in Theorem \ref%
{t5}, we have%
\begin{eqnarray*}
&&\left\vert \frac{f\left( a,c\right) +f\left( a,d\right) +f\left(
b,c\right) +f\left( b,d\right) }{4}\right. \\
&& \\
&&+\left\{ \frac{\Gamma \left( \alpha +1\right) \Gamma \left( \beta
+1\right) }{4\left( b-a\right) ^{\alpha }\left( d-c\right) ^{\beta }}\right.
\\
&& \\
&&\left. \left. \times \left[ J_{a^{+},c^{+}}^{\alpha ,\beta }f\left(
b,d\right) +J_{a^{+},d^{-}}^{\alpha ,\beta }f\left( b,c\right)
+J_{b^{-},c^{+}}^{\alpha ,\beta }f\left( a,d\right) +J_{b^{-},d^{-}}^{\alpha
,\beta }f\left( a,c\right) \right] \right\} -A\right\vert \\
&& \\
&\leq &\frac{\left( b-a\right) \left( d-c\right) }{\left[ \left( \alpha
p+1\right) \left( \beta p+1\right) \right] ^{\frac{1}{p}}} \\
&& \\
&&\times \left( \frac{1}{\left( s+1\right) ^{2}}\left[ \left\vert \frac{%
\partial ^{2}f}{\partial k\partial t}\left( a,c\right) \right\vert
^{q}+\left\vert \frac{\partial ^{2}f}{\partial k\partial t}\left( a,d\right)
\right\vert ^{q}+\left\vert \frac{\partial ^{2}f}{\partial k\partial t}%
\left( b,c\right) \right\vert ^{q}+\left\vert \frac{\partial ^{2}f}{\partial
k\partial t}\left( b,d\right) \right\vert ^{q}\right] \right) ^{\frac{1}{q}}
\end{eqnarray*}%
where%
\begin{eqnarray*}
A &=&\frac{\Gamma \left( \beta +1\right) }{4\left( d-c\right) ^{\beta }}%
\left[ J_{c^{+}}^{\beta }f\left( a,d\right) +J_{c^{+}}^{\beta }f\left(
b,d\right) +J_{d^{-}}^{\beta }f\left( a,c\right) +J_{d^{-}}^{\beta }f\left(
b,c\right) \right] \\
&& \\
&&+\frac{\Gamma \left( \alpha +1\right) }{4\left( b-a\right) ^{\alpha }}%
\left[ J_{a^{+}}^{\alpha }f\left( b,c\right) +J_{a^{+}}^{\alpha }f\left(
b,d\right) +J_{b^{-}}^{\alpha }f\left( a,c\right) +J_{b^{-}}^{\alpha
}f\left( a,d\right) \right] .
\end{eqnarray*}
\end{corollary}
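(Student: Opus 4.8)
The plan is to obtain Corollary \ref{c3} as the direct specialization of Theorem \ref{t6} to the choice $h(\alpha)=\alpha^{s}$ with $s\in(0,1)$. First I would check that the hypothesis of Theorem \ref{t6} is available: as recorded in the introduction, when $h(\alpha)=\alpha^{s}$ the class of functions that are $s$-convex on the co-ordinates is contained in $SX(h,\Delta)$, so the assumption that $\left\vert\frac{\partial^{2}f}{\partial t\partial k}\right\vert^{q}$ is $h$-convex on the co-ordinates (for this particular $h$) holds, and the inequality (\ref{10}) applies verbatim with this weight. Nothing in the left-hand side of (\ref{10}) nor in the auxiliary quantity $A$ depends on $h$, so these are carried over unchanged; the entire content of the corollary lies in evaluating the four $h$-weighted double integrals on the right of (\ref{10}).

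The core computation is then elementary. Each integrand factors as a product of a function of $t$ and a function of $k$, so each double integral over $[0,1]\times[0,1]$ splits into a product of two single integrals. Using $h(\alpha)=\alpha^{s}$ together with the evaluations
\[
\int_{0}^{1}t^{s}\,dt=\frac{1}{s+1},\qquad \int_{0}^{1}(1-t)^{s}\,dt=\frac{1}{s+1}
\]
(the second following from the substitution $u=1-t$), I would obtain the uniform reduction
\[
\int_{0}^{1}\int_{0}^{1}h(t)h(k)\,dk\,dt=\int_{0}^{1}\int_{0}^{1}h(t)h(1-k)\,dk\,dt=\int_{0}^{1}\int_{0}^{1}h(1-t)h(k)\,dk\,dt=\int_{0}^{1}\int_{0}^{1}h(1-t)h(1-k)\,dk\,dt=\frac{1}{(s+1)^{2}}.
\]

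Since all four weight integrals collapse to the common value $\frac{1}{(s+1)^{2}}$, this constant factors out of the sum enclosed in the parentheses of (\ref{10}), leaving
\[
\frac{1}{(s+1)^{2}}\left[\left\vert\frac{\partial^{2}f}{\partial k\partial t}(a,c)\right\vert^{q}+\left\vert\frac{\partial^{2}f}{\partial k\partial t}(a,d)\right\vert^{q}+\left\vert\frac{\partial^{2}f}{\partial k\partial t}(b,c)\right\vert^{q}+\left\vert\frac{\partial^{2}f}{\partial k\partial t}(b,d)\right\vert^{q}\right],
\]
and raising this to the power $1/q$, with the prefactor $\frac{(b-a)(d-c)}{[(\alpha p+1)(\beta p+1)]^{1/p}}$ retained from (\ref{10}), gives exactly the asserted bound. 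No step presents a genuine obstacle; the only point deserving care is the observation that all four integrals reduce to the \emph{same} value $\frac{1}{(s+1)^{2}}$, which is precisely what licenses pulling the constant outside the bracket and produces the $(s+1)^{-2}$ factor in the final estimate.
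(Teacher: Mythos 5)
Your proposal is correct and follows essentially the same route as the paper, which states Corollary \ref{c3} without an explicit proof precisely because it is the direct specialization of Theorem \ref{t6} (the citation of Theorem \ref{t5} in the corollary's statement is a typo, which you have implicitly and correctly repaired by working from inequality (\ref{10})). Your evaluation $\int_{0}^{1}t^{s}\,dt=\int_{0}^{1}(1-t)^{s}\,dt=\frac{1}{s+1}$, which collapses all four weight integrals in (\ref{10}) to the common value $\frac{1}{(s+1)^{2}}$ so that the constant factors out of the bracket before taking the $q$-th root, is exactly the computation the corollary rests on.
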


\end{document}